\newtheorem{theorem}{Theorem}[section]
\newtheorem{lemma}[theorem]{Lemma}
\newtheorem{proposition}[theorem]{Proposition}
\newtheorem{problem}[theorem]{Problem}
\theoremstyle{definition}
\newtheorem{definition}[theorem]{Definition}
\newtheorem{example}[theorem]{Example}
\newtheorem{remark}[theorem]{Remark}
\def\forch{{\mathrm{ch}}}
\def\fgmod{{\mathcal{C}}}
\begin{document}

\title[Rationality of Hilbert series in noncommutative invariant theory]
{Rationality of Hilbert series\\
in noncommutative invariant theory}

\author[M\'aty\'as Domokos and Vesselin Drensky]
{M\'aty\'as Domokos and Vesselin Drensky}
\address{MTA Alfr\'ed R\'enyi Institute of Mathematics,
Re\'altanoda utca 13-15, 1053 Budapest, Hungary}
\email{domokos.matyas@renyi.mta.hu}
\address{Institute of Mathematics and Informatics,
Bulgarian Academy of Sciences,
Acad. G. Bonchev Str., Block 8,
1113 Sofia, Bulgaria}
\email{drensky@math.bas.bg}

\thanks{This project was carried out in the framework of the exchange program
between the Hungarian and Bulgarian Academies of Sciences.
It was partially supported by the Hungarian National Research, Development and Innovation Office,  NKFIH K 119934
and by Grant I02/18 of the Bulgarian National Science Fund.}

\subjclass[2010]{16R10, 13A50, 13A02, 15A72, 16W22, 16W50, 05E10.}

\keywords{rational Hilbert series, Schur function, relatively free associative algebras,
rational representation, unipotent subgroup, reductive group, tensor algebra, noncommutative invariant theory}

\begin{abstract}
It is a fundamental result in commutative algebra and invariant theory that a finitely generated graded module
over a commutative finitely generated graded algebra has a rational Hilbert series, and consequently
the Hilbert series of the algebra of polynomial invariants of a group of linear transformations is rational,
whenever this algebra is finitely generated.
This basic principle is applied here to prove rationality of Hilbert series of algebras of invariants that
are neither commutative nor finitely generated. Our main focus  is on linear groups acting on certain factor algebras
of the tensor algebra that arise naturally in the theory of polynomial identities.
\end{abstract}

\maketitle


\section{Introduction}\label{sec:intro}

A natural possible framework to develop noncommutative invariant theory is to investigate the action of
a linear transformation group $G\leq GL(V)$  on the factor of the tensor algebra $K\langle V\rangle$
modulo a $GL(V)$-stable ideal $I$, where $V$ is a finite dimensional vector space over
a base field $K$ of characteristic 0. We shall assume that $\dim(V)\ge 2$.
A special class of $GL(V)$-stable ideals in $K\langle V\rangle$ are the T-ideals.
Recall that an ideal in $K\langle V \rangle$ is a {\it T-ideal} if it is stable  with respect to
all $K$-algebra endomorphisms of $K\langle V\rangle$.
T-ideals are associated with varieties of associative  $K$-algebras. The {\it variety} $\mathfrak{R}$ defined by a system
of elements $J=\{f_j\}$ in the free associative algebra $K\langle x_1,x_2,\ldots\rangle$ of countable rank
consists of all associative $K$-algebras $A$ such that  $f_j=0$ is a polynomial identity on $A$ for all $f_j\in J$.
Identifying the tensor algebra $K\langle V\rangle$
with the free associative algebra $K\langle x_1,\ldots,x_n\rangle$ of rank $n=\dim(V)$,
the T-ideal $I=I_n(\mathfrak{R})$ is recovered as the set of $n$-variable identities satisfied by all algebras in
a suitable variety $\mathfrak{R}$. Therefore in this case we shall write $K\langle V\rangle /I =F_n(\mathfrak{R})$,
and call it a {\it relatively free algebra}, since it is a free object  generated by $n$ elements in the variety
$\mathfrak{R}$.
For a background on T-ideals and polynomial identities of associative algebras see \cite{DF}.

We shall assume that $I_n(\mathfrak{R})$ is non-zero, so $F_n(\mathfrak{R})$ is not the free algebra.
In the special case when  $\mathfrak{R}$  is the variety of commutative algebras, we recover
$S(V)=K[x_1,\ldots,x_n]$, the symmetric tensor algebra of $V$, which can be thought of as the algebra of polynomial functions
on the dual space $V^*$ of $V$.
For surveys on the study of $F_n(\mathfrak{R})^G$ see \cite{F} and \cite{D3}.
Loosely speaking, the moral of these works is that one can expect results analogous to the commutative case only when
$F_n(\mathfrak{R})$ is not very far from the commutative polynomial algebra $K[x_1,\ldots,x_n]$. For example,
benchmark results of commutative invariant theory are the statements that the algebras of (commutative) polynomial invariants
of finite groups or more generally, reductive groups are finitely generated.   Kharchenko \cite{Kh} characterized
the class of varieties $\mathfrak{R}$ such that $F_n(\mathfrak{R})^G$ is finitely generated for all finite groups $G$,
and this class turned out to be  rather narrow.
The present authors in \cite{DD}
characterized the even narrower class of varieties such that  $F_n(\mathfrak{R})^G$ is finitely generated for all  reductive $G$.
Furthermore, we mention that the first named author  in \cite{Do} extended the Sheppard-Todd-Chevalley Theorem by showing
that $F_n(\mathfrak{R})^G\cong F_n(\mathfrak{R})$ holds for some finite group $G$ if and  only if $G$
is a pseudoreflection group and the T-ideal $I(\mathfrak{R})$ contains $[[x_1,x_2],x_2]$, where we write $[a,b]=ab-ba$.

In contrast with that, in the present paper we deal with a general fact from commutative invariant theory that remains valid
for all noncommutative relatively free algebras.
Namely, by the so-called Hilbert-Serre Theorem, whenever $K[x_1,\ldots,x_n]^G$ is finitely generated, its Hilbert series is rational.
Notably this holds when $G$ is reductive or $G$ is a maximal unipotent subgroup of a reductive subgroup of $GL(V)$.
Our main result Theorem~\ref{thm:main} is that the Hilbert series of the graded algebra $F_n(\mathfrak{R})^G$ is rational,
provided that $G\le GL(V)$ is a group 
for which the finite generation property holds for subalgebras of $G$-invariants in finitely generated   commutative graded algebras $R$ on which $GL(V)$ acts rationally 
(in particular, for a closed reductive subgroup $G$ of $GL(V)$ or for a maximal unipotent subgroup $G$ of a closed reductive subgroup of $GL(V)$). 
Consequently, although $F_n(\mathfrak{R})^G$ is rarely finitely generated even for reductive groups,
it always has rational Hilbert series for $G$ reductive or a maximal unipotent subgroup of a reductive subgroup of $GL_n$.

A key ingredient of our proof is that extending a result  of Belov \cite{Bel}, Berele \cite{Ber} proved that the multivariate
Hilbert series of a relatively free algebra is of special form (called {\it nice} by him). In Section~\ref{sec:nice_rational}
(working in a more general setup than \cite{Ber}), we give in
Proposition~\ref{prop:characterization_nice_rational} a characterization of nice rational symmetric functions in terms of formal
characters of the general linear group $GL_n$ and finitely generated modules over commutative polynomial algebras,
endowed with a compatible rational $GL_n$-action.
This characterization allows us to deduce in Section~\ref{sec:hilbert_series} the general
Theorem~\ref{thm:condition_a} about rationality of the Hilbert series of the subspace of $G$-invariants in a graded
vector space on which $GL_n$ acts rationally such that the formal character is a nice rational function, provided
that the subgroup $G\le GL_n$ satisfies a condition formulated  in terms of commutative invariant theory.
In Section~\ref{sec:rationality of Nagata} we discuss the problem for the rationality of the Hilbert series of non-finitely generated algebras
of commutative polynomial invariants. We compute the Hilbert series of a textbook variant of the famous example of Nagata of an algebra of commutative invariants which is not finitely generated.  It turns out  that this algebra still has a nice rational Hilbert series.
We also give an example of a normal graded subalgebra of $K[x_1,x_2]$ with transcendental Hilbert series, and formulate Problem~\ref{problem:strong_nagata} concerning rationality of Hilbert series in commutative invariant theory.
In Section~\ref{sec:applications} we apply Theorem~\ref{thm:condition_a} for relatively free algebras,
and deduce by the above mentioned result of Belov and Berele our main result Theorem~\ref{thm:main} on rationality
of the Hilbert series of the subalgebra of invariants $F_n(\mathfrak{R})^G$ for a notable class of groups  $G\le GL_n$.
In Section~\ref{sec:examples} we give examples which show that the condition that the multivariate
Hilbert series of the relatively free algebra is a nice rational function cannot be relaxed to the weaker condition that
it is a rational function. We also compute the Hilbert series of the algebra of invariants of the multiplicative group of the base field
acting on the 2-generated relatively free algebra in the variety generated by the $2\times 2$ matrix algebra.

Finally, we change our topic in Section~\ref{sec:transfer}, and consider group actions on PI-algebras
(algebras satisfying a polynomial identity), that are not necessarily relatively free algebras.
In fact a finitely generated associative $K$-algebra is a PI-algebra if and only if it is a homomorphic image
of some $F_n(\mathfrak{R})$ for some $n$
and a proper subvariety $\mathfrak{R}$ of the variety of associative algebras. Vonessen \cite{V} proved that if $G$
is a linearly reductive group acting rationally on a finitely generated
noetherian PI-algebra $R$, then the subalgebra $R^G$ is finitely generated and noetherian. We note here that the transfer
principle (a well-known tool in commutative invariant theory, see \cite{G}) can be applied in this noncommutative setting as well,
and assert in Theorem~\ref{thm:transfer} that under the  conditions in the theorem of Vonessen, $R^H$
is finitely generated and noetherian for any subgroup $H$ of $G$ for which $K[G]^H$ is finitely generated,
where the action of $H$ on the coordinate ring $K[G]$ of $G$ is induced by the right translation action of $H$ on $G$.

We close the introduction by mentioning that a more general setup for developing noncommutative invariant theory that also received attention is provided by Hopf algebra actions on noncommutative rings, see for example the survey 
\cite{bahturin}, the papers \cite{gr-hr}, \cite{ery}, and the references therein.  
A typical  problem considered in these works is whether the existence of a 
polynomial identity for the algebra of invariants implies the existence of an identity for the whole algebra.


\section{A characterization of nice rational symmetric functions}\label{sec:nice_rational}

Write $GL_n$  for the general linear group $GL_n(K)$ over $K$, viewed as an affine algebraic group.
In this paper by a {\it representation of} $GL_n$ we shall always mean a {\it rational representation}, i.e.,
a $K$-vector space $Z$ together with a group homomorphism $\rho:GL_n\to GL(Z)$ giving an  action of $GL_n$ on
$Z$ via linear transformations such that $Z$ is spanned by finite dimensional $GL_n$-invariant subspaces
and for any finite dimensional $GL_n$-invariant subspace $W$ of $Z$, the map $GL_n\to GL(W)$, $g\mapsto \rho(g)\vert_W$
is a morphism of affine algebraic varieties. When $Z$ is finite dimensional, define
the {\it formal character} of $Z$ as the element $\forch_Z$ in the Laurent polynomial ring $\mathbb{Z}[t_1^{\pm 1},\ldots,t_n^{\pm 1}]$
satisfying that for any diagonal matrix
$\mathrm{diag}(z_1,\ldots,z_n)\in GL_n$ with diagonal entries $z_1,\ldots,z_n\in K\setminus \{0\}$ we have the equality
$\mathrm{Tr}(\rho(\mathrm{diag}(z_1,\ldots,z_n)))=\forch_V(z_1,\ldots,z_n)$.
In fact $\forch_V\in \mathbb{N}_0[t_1^{\pm 1},\ldots,t_n^{\pm 1}]$ has non-negative coefficients and is symmetric in the variables $t_1,\ldots,t_n$.
Hence it belongs to the subring
$\mathbb{Z}[t_1^{\pm 1},\ldots,t_n^{\pm 1}]^{S_n}$ of $S_n$-invariants in $\mathbb{Z}[t_1^{\pm 1},\ldots,t_n^{\pm 1}]$,
where the symmetric group $S_n$ acts by permuting the indeterminates.
It is well known that for any sequence $\lambda\in \mathbb{Z}^n$ with $\lambda_1\ge \cdots \ge \lambda_n$
there is a finite dimensional representation $L_{\lambda}$ of $GL_n$ whose formal character is given by the equality
\[
\forch_{L_{\lambda}}\cdot \prod_{1\le i<j\le n}(t_i-t_j)=\sum_{\sigma\in S_n}\mathrm{sign}(\sigma)\prod_{i=1}^nt_{\sigma(i)}^{\lambda_i+n-i}.
\]
The set of all $\forch_{L_{\lambda}}$ forms a basis of the free $\mathbb{Z}$-module $\mathbb{Z}[t_1^{\pm 1},\ldots,t_n^{\pm 1}]^{S_n}$.
Note that when $\lambda_n\ge 0$, the formal character $\forch_{L_{\lambda}}$ is the Schur function associated to the partition $\lambda$.
Moreover, all representations of $GL_n$ are completely reducible, and
$\{L_{\lambda}\mid \lambda\in \mathbb{Z}^n,\, \lambda_1\ge\cdots\ge\lambda_n\}$ is a complete irredundant list
of representatives of the isomorphism classes of irreducible representations of $GL_n$ (see for example Section 9.8.1 in \cite{P}).
We shall deal with the situation when $\displaystyle Z=\bigoplus_{d=0}^{\infty}Z_d$ is graded, each homogeneous component $Z_d$ is finite dimensional
and $GL_n$-invariant.
In this case we say that $Z$ is a {\it graded representation of} $GL_n$ and call the  {\it formal character of the
graded representation} $Z$ the formal power series
\[
\forch_Z(q)=\sum_{d=0}^\infty \forch_{Z_d} \cdot q^d\in \mathbb{Z}[t_1^{\pm 1},\ldots,t_n^{\pm 1}]^{S_n}[[q]].
\]

\begin{definition}\label{def:C}
Denote by $\fgmod$ the class of graded representations $Z$ of $GL_n$ that have also the structure of a finitely generated graded module
over the symmetric tensor algebra
(polynomial algebra) $S(W)$  for some finite dimensional graded representation $W$ of $GL_n$ (the grading on $W$ induces
a grading on $S(W)$ in the standard way),
such that the action of $GL_n$ on $Z$ and $S(W)$ is compatible with the $S(W)$-module structure of $V$ (i.e.,
for $g\in GL_n$, $f\in S(W)$, and $m\in Z$, we have
$g\cdot (fm)=(g\cdot f) (g\cdot m)$).
\end{definition}

\begin{proposition}\label{prop:characterization_nice_rational}
An element $f\in \mathbb{Z}[t_1^{\pm 1},\ldots,t_n^{\pm 1}]^{S_n}[[q]]$ belongs to the $\mathbb{Z}$-submodule
of $\mathbb{Z}[t_1^{\pm 1},\ldots,t_n^{\pm 1}]^{S_n}[[q]]$
generated by $\{\forch_V(q)\mid V\in \fgmod\}$ if and only if $f$ is of the form
\begin{equation}\label{eq:nice_rational}
f=\frac{P(t_1,\ldots,t_n,q)}{\displaystyle \prod_{(\alpha,k)}(1-t_1^{\alpha_1}\cdots t_n^{\alpha_n}q^{k})}
\end{equation}
where $P\in \mathbb{Z}[t_1^{\pm 1},\ldots,t_n^{\pm 1}][q]$ is a polynomial in $q$ and the product
in the denominator ranges over a finite multiset of pairs $(\alpha,k)$
where $\alpha\in \mathbb{Z}^n$, $k\in \mathbb{N}_0$.
\end{proposition}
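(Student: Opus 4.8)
The plan is to prove the two inclusions separately; write $\mathcal{N}$ for the set of all $f$ of the form \eqref{eq:nice_rational} and $M$ for the $\mathbb{Z}$-submodule generated by $\{\forch_V(q)\mid V\in\fgmod\}$, and abbreviate $t^\alpha=t_1^{\alpha_1}\cdots t_n^{\alpha_n}$. I would first record that $\mathcal{N}$ is itself a $\mathbb{Z}$-module: a finite $\mathbb{Z}$-combination of expressions \eqref{eq:nice_rational} can be put over the common denominator obtained by taking the union of the denominator multisets, and the numerator again lies in $\mathbb{Z}[t_1^{\pm 1},\ldots,t_n^{\pm 1}][q]$. Thus to establish $M=\mathcal{N}$ it is enough to treat each generator and, conversely, each element of $\mathcal{N}$.

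For the inclusion $M\subseteq\mathcal{N}$ it suffices to show $\forch_V(q)\in\mathcal{N}$ for a single $V\in\fgmod$. Here $V$ is a finitely generated graded $S(W)$-module for a finite-dimensional graded representation $W$; since every homogeneous component of $S(W)$ must be finite-dimensional, $W$ is concentrated in positive degrees, and choosing a weight basis $w_1,\ldots,w_r$ with $w_i$ of weight $\mu^{(i)}$ and degree $d_i\ge 1$ gives $\forch_{S(W)}(q)=\prod_{i=1}^r(1-t^{\mu^{(i)}}q^{d_i})^{-1}$. Because $\mathrm{char}(K)=0$, the group $GL_n$ is linearly reductive, so $V$ admits a finite $GL_n$-equivariant graded free resolution $0\to F_m\to\cdots\to F_0\to V\to 0$ (finite by Hilbert's syzygy theorem), with $F_j=S(W)\otimes_K U_j$ for finite-dimensional graded representations $U_j$. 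Additivity of the formal character on exact sequences then yields $\forch_V(q)=(\sum_{j}(-1)^j\forch_{U_j}(q))\,\forch_{S(W)}(q)$, which is of the shape \eqref{eq:nice_rational} since the bracketed factor is a polynomial in $q$ with symmetric Laurent-polynomial coefficients. Hence $\forch_V(q)\in\mathcal{N}$.

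The reverse inclusion is the substantive direction, and I would prepare it with one observation: for any finite-dimensional graded representation $U$ the free module $S(W)\otimes_K U$ lies in $\fgmod$ with formal character $\forch_U(q)\,\forch_{S(W)}(q)$; since $\{\forch_{L_\lambda}q^d\}$ is a $\mathbb{Z}$-basis of $\mathbb{Z}[t_1^{\pm 1},\ldots,t_n^{\pm 1}]^{S_n}[q]$ and each $\forch_{L_\lambda}q^d$ is the character of $L_\lambda$ placed in degree $d$, the module $M$ contains $h\cdot\forch_{S(W)}(q)$ for \emph{every} $h\in\mathbb{Z}[t_1^{\pm 1},\ldots,t_n^{\pm 1}]^{S_n}[q]$ and every positively graded $W$. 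Now take $f=P/D\in\mathcal{N}$ with $D=\prod_{(\alpha,k)}(1-t^\alpha q^k)$. First I would reduce to $k\ge 1$: a factor $1-t^\alpha$ with $k=0$, $\alpha\neq 0$, has non-unit value $1-t^\alpha$ at $q=0$, hence is not invertible in $\mathbb{Z}[t_1^{\pm 1},\ldots,t_n^{\pm 1}][[q]]$, so for $f$ to be a genuine power series with Laurent-polynomial coefficients such a factor must cancel against $P$; discarding these we may assume every $k\ge 1$. The key step is then to realize the (generally non-symmetric) denominator by a genuine representation: for each pair $(\alpha,k)$ let $\lambda$ be the decreasing rearrangement of $\alpha$, so that $\alpha$ is an extreme weight of $L_\lambda$, and set $W=\bigoplus L_\lambda$ with the corresponding summand placed in degree $k\ge 1$, one summand per element of the denominator multiset. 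The $\alpha$-weight vector of that summand contributes the factor $1-t^\alpha q^k$ to $E:=\forch_{S(W)}(q)^{-1}=\prod_\mu(1-t^\mu q^{\deg\mu})$, so $D$ divides $E$; writing $E=D\cdot D'$ we get $fE=P\,D'$, a polynomial in $q$ with Laurent-polynomial coefficients, and it is symmetric because both $f$ and $E$ (the reciprocal of a formal character) are symmetric. Thus $h:=fE\in\mathbb{Z}[t_1^{\pm 1},\ldots,t_n^{\pm 1}]^{S_n}[q]$ and $f=h\,\forch_{S(W)}(q)\in M$, giving $\mathcal{N}\subseteq M$.

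The main obstacle is precisely this last construction: a denominator factor $1-t^\alpha q^k$ need not be symmetric, whereas only $S_n$-symmetric data can come from genuine $GL_n$-representations via $S(W)$. Passing to the extreme weight of $L_\lambda$ for $\lambda$ the sorted $\alpha$ lets me capture any prescribed single weight inside a representation, and invoking the $S_n$-symmetry of $f$ (rather than of $P$ alone) is what guarantees that the complementary numerator $fE$ is symmetric and hence expressible through the free modules $S(W)\otimes_K U$. I expect the remaining points — additivity of $\forch$ on the equivariant resolution and the elementary non-invertibility argument eliminating $k=0$ factors — to be routine.
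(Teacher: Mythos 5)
Your proof is correct and follows essentially the same route as the paper's: the inclusion $M\subseteq\mathcal{N}$ via a $GL_n$-equivariant graded free resolution from the Hilbert Syzygy Theorem, and the reverse inclusion by symmetrizing each denominator factor $1-t^\alpha q^k$ into the full product over the weights of $L_\lambda$ (the paper's $e^\lambda_k$), writing the resulting symmetric numerator in the basis $\forch_{L_\mu}q^r$, and realizing each term by the free module $S(W)\otimes L_\mu$ placed in degree $r$. Your explicit preliminary cancellation of the $k=0$ factors is a small extra precaution the paper does not spell out, but it does not change the argument.
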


\begin{proof}
 For $\lambda\in\mathbb{Z}^n$ with $\lambda_1\ge \cdots \ge \lambda_n$ and $k\in\mathbb{N}_0$ write
 \[
 e^{\lambda}_k=\prod(1-t_1^{\alpha_1}\cdots t_n^{\alpha_n}q^k)
 \]
 where a factor $(1-t_1^{\alpha_1}\cdots t_n^{\alpha_n}q^k)$ in the above product occurs exactly with multiplicity the coefficient of
 $t_1^{\alpha_1}\cdots t_n^{\alpha_n}$ in $\forch_{L_{\lambda}}\in \mathbb{N}_0[t_1^{\pm 1},\ldots,t_n^{\pm 1}]$.

Suppose that $f$ is of the form \eqref{eq:nice_rational}. For any $\alpha\in \mathbb{Z}^n$ and $k\in\mathbb{N}_0$,
$(1-t_1^{\alpha_1}\cdots t_n^{\alpha_n}q^{k})$ is a factor of $e^{\lambda}_{k}$, where $\lambda$ is obtained by
rearranging the elements in the sequence $(\alpha_1,\ldots,\alpha_n)$ in the
decreasing order. Therefore we may assume that the denominator of $f$ in \eqref{eq:nice_rational} is
$\displaystyle \prod_{(\lambda,k)\in A}e^{\lambda}_k$ for a finite multiset $A$ of pairs $(\lambda,k)$
where $k\in\mathbb{N}_0$, $\lambda\in \mathbb{Z}^n$ with
$\lambda_1\ge\cdots\ge\lambda_n$.
Then the denominator of $f$ is symmetric, hence the numerator
$P$ of $f$ belongs to
$\mathbb{Z}[t_1^{\pm 1},\ldots,t_n^{\pm 1}]^{S_n}[q]$ and can be written as an integral linear combination
\[
P=\sum_{(\mu,r)\in B}m_{\mu,r} \forch_{L_{\mu}}(t_1,\ldots,t_n)q^r
\]
where $B$ is a finite multiset of pairs $(\mu,r)$, $r\in \mathbb{N}_0$, $\mu\in \mathbb{Z}^n$,
$\mu_1\ge\cdots \ge \mu_n$.
Take  the finite dimensional graded $GL_n$-representation
\[
W=\bigoplus W_k, \quad
W_k=\bigoplus_{\{\lambda\mid (\lambda,k)\in A\}}L_{\lambda}.
\]
By construction of $W$ the graded $GL_n$-representation $S(W)$ has formal character
\[
\forch_{S(W)}(t)=\frac{1}{\displaystyle \prod_{(\lambda,k)\in A}e^{\lambda}_k}.
\]
For  $(\mu,r)\in B$ denote by $L_{\mu}^{[r]}$ the finite dimensional graded $GL_n$-representation whose degree $r$ component is $L_{\mu}$ and
all other homogeneous components are zero; the formal character of this graded representation is
$q^r\forch_{L_{\mu}}(t_1,\ldots,t_n)$.
Consider now the graded $GL_n$-representation $S(W)\otimes L_{\mu}^{[r]}$. It is naturally a graded  $S(W)$-module, it is free and is generated by the
finite dimensional subspace $1\otimes L_{\mu}^{[r]}$. Moreover, the $GL_n$-action on $S(W)$ is compatible with the action on
$S(W)\otimes L_{\mu}^{[r]}$.
Thus  $S(W)\otimes L_{\mu}^{[r]}$ belongs to $\fgmod$, and
\[
\forch_{S(W)\otimes L_{\mu}^{[r]}}(q)=\forch_{S(W)}(q)\cdot \forch_{L_{\mu}^{[r]}}(q)
=\frac{\forch_{L_{\mu}}(t_1,\ldots,t_n)q^r}{\displaystyle \prod_{(\lambda,k)\in A}e^{\lambda}_k}.
\]
Summarizing, we obtained
\[
f=\frac{\sum_{(\mu,r)\in B}m_{\mu,r} \forch_{L_{\mu}}(t_1,\ldots,t_n)q^r}{\displaystyle \prod_{(\lambda,k)\in A}e^{\lambda}_k}
=\sum_{(\mu,r)\in B}m_{\mu,r}\forch_{S(W)\otimes L_{\mu}^{[r]}}
\]
showing that $f$ belongs to $\displaystyle \sum_{Z\in \fgmod} \mathbb{Z}\forch_Z$.

To see the converse, take $Z\in\fgmod$. Then $Z$ is a finitely generated graded module over the commutative polynomial algebra $S(W)$,
where $\displaystyle W=\bigoplus_{k=0}^{\infty}W_k$ is a finite dimensional graded representation of $GL_n$. By the Hilbert Syzygy Theorem,
there exists a $d\le \dim(W)$ and an exact sequence
\begin{equation}\label{eq:exact} 0\to F_d\to F_{d-1} \to \cdots \to F_1\to F_0\to Z\to 0\end{equation}
of graded $S(W)$-module homomorphisms, where the $F_i$ are of the form $S(W)\otimes U_i$
for some finite dimensional graded representations $U_i$ of $GL_n$
(so they are free $S(W)$-modules belonging to $\fgmod$), and the maps are $GL_n$-equivariant.
Indeed, take for $U_0$ a $GL_n$-stable direct complement of
$S(W)_+Z$, where $S(W)_+$ stands for the sum of the positive degree homogeneous components of $S(W)$.
Since $S(W)_+Z$ is a graded subspace of $Z$, we may also assume that $U_0$ is spanned by homogeneous elements.
Then $U_0$ minimally generates $Z$ as an $S(W)$-module and is finite dimensional by the graded Nakayama Lemma
(see for example Lemma 3.5.1 in \cite{DK}). So we get an $S(W)$-module  surjection
$\varphi_0:S(W)\otimes U_0\twoheadrightarrow Z$, which is $GL_n$-equivariant by construction. Moreover, identifying
the subspace $1\otimes U_0$ of $S(W)\otimes U_0$ with $U_0\subset Z$, the free module $S(W)\otimes U_0$ becomes graded,
and the surjection $\varphi_0$ is a homomorphism of graded modules. Next repeat the same construction for the kernel
of the homomorphism $\varphi_0$ instead of $Z$, to come up with $U_1$ and
$\varphi_1:S(W)\otimes U_1\twoheadrightarrow \ker(\varphi_0)$. Continue in the same way. This process stops
in at most $\dim(W)$ steps by the Hilbert Syzygy Theorem
(see for example Section 1.3.2 in \cite{DK}), and we obtain the desired exact sequence \eqref{eq:exact}.
It follows that $\displaystyle \forch_M(q)=\sum_{j=0}^d (-1)^j\forch_{F_i}(q)$.
Note that  $\forch_{F_i}=\forch_{S(W)}(q)\cdot \forch_{U_i}(q)$. Here $\forch_{U_i}(q)\in\mathbb{Z}[t_1^{\pm 1},\ldots,t_n^{\pm 1}]^{S_n}[q]$
is a polynomial in $q$,
whereas $\displaystyle \forch_{S(W)}(q)=\prod_{(\lambda,k)}(e^{\lambda}_k)^{-m_{\lambda,k}}$, where $m_{\lambda,k}$ stands for the multiplicity of $L_{\lambda}$
as a summand in $W_k$.  It follows that
\[
\forch_Z(q)=\frac{\sum_{i=0}^d (-1)^i\forch_{U_i}(q)}{\displaystyle \prod_{(\lambda,k)}(e^{\lambda}_k)^{m_{\lambda,k}}}
\]
is indeed of the form \eqref{eq:nice_rational}.
\end{proof}

\begin{definition} \label{def:berele}
Following Berele \cite{Ber} (but changing a bit the setup considered by him) we call an element
of $\mathbb{Z}[t_1^{\pm 1},\ldots,t_n^{\pm 1}][[q]]$
a {\it nice rational  function} if it is of the form \eqref{eq:nice_rational}.
\end{definition}

\begin{remark}\label{remark:nice_rational} 
(i) With this terminology Proposition~\ref{prop:characterization_nice_rational} asserts 
that the abelian group of symmetric nice rational functions is generated by 
$\{\forch_V(q)\mid V\in \fgmod\}$. 

(ii) Nice rational functions not depending on the indeterminates $t_1,\dots,t_n$  are rational functions in $\mathbb{Z}[[q]]$ with denominators which are products of
binomials $1-q^k$. Recall that the Hilbert-Serre Theorem asserts that the Hilbert series of a finitely generated graded module over a finitely generated commutative graded $K$-algebra $R$ with $R_0=K$ is a nice rational function in $\mathbb{Z}[[q]]$. 

(iii) If $f(t_1,\dots,t_n)$ is a nice rational function in the sense of \cite{Ber}, then making the substitution $t_i\mapsto t_iq$ we get that 
$f(t_1q,\dots,t_nq)$ is a nice rational function in the sense of our 
Defininition~\ref{def:berele}. 
\end{remark}


\section{Hilbert series of fixed point subspaces}\label{sec:hilbert_series}

Fix now a subgroup $G$ of $GL_n$.  We write $Z^G$ for the subspace of $G$-fixed points in a representation $Z$ of $GL_n$.
Define
\[
D:\mathbb{Z}[t_1^{\pm 1},\ldots,t_n^{\pm 1}]^{S_n} \to \mathbb{Z}
\]
as the abelian group homomorphism with $D(\forch_{L_{\lambda}})=\dim(L_{\lambda}^G)$.
This definition makes sense, since the $\forch_{L_{\lambda}}$ constitute a free $\mathbb{Z}$-module basis in
$\mathbb{Z}[t_1^{\pm 1},\ldots,t_n^{\pm 1}]^{S_n}$.
Furthermore, we keep the notation $D$ for the abelian group homomorphism
\[
D:\mathbb{Z}[t_1^{\pm 1},\ldots,t_n^{\pm 1}]^{S_n}[[q]] \to \mathbb{Z}[[q]]\text{ with }
D\left(\sum_{d=0}^\infty c_dq^d\right)=\sum_{d=0}^\infty D(c_d)q^d.
\]
Since every representation of $GL_n$ is completely reducible, for any finite dimensional $GL_n$-representation $W$ we have
$D(\forch_W)=\dim W^G$, and consequently
for a graded representation $\displaystyle Z=\bigoplus_{d=0}^\infty Z_d$ of $GL_n$ we have that
\begin{equation}\label{eq:hilbert-forch} D(\forch_Z(q))=\sum_{d=0}^\infty \dim(Z_d^G)q^d=H(Z^G,q)\end{equation}
is the {\it Hilbert series of the graded vector space} $Z^G$.

We need the following technical lemma (we provide a proof, since though the arguments are well known,
we did not find a reference where this is stated exactly in the form as below):
\begin{lemma}\label{lemma:equivalent}
The following conditions are equivalent for a subgroup $G$ of $GL_n$:
\begin{itemize}
\item[(i)] For any $Z\in \fgmod$, the subspace $Z^G$ is finitely generated as an $S(W)^G$-module (where $W$
is as in Definition~\ref{def:C} of the class $\fgmod$).
\item[(ii)] For any finitely generated graded commutative $K$-algebra $R$ on which $GL_n$ acts rationally via graded $K$-algebra automorphisms,
the subalgebra $R^G$ of $G$-invariants is a finitely generated $K$-algebra.
\end{itemize}
\end{lemma}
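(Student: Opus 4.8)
The plan is to prove the equivalence by establishing both implications, exploiting the fact that an algebra of the form $S(W)$ is itself a finitely generated commutative graded $K$-algebra on which $GL_n$ acts rationally, while conversely any such algebra $R$ can be viewed as an object carrying the module structure relevant to the class $\fgmod$.

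\medskip

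For the implication (ii) $\Rightarrow$ (i), I would proceed as follows. Take $Z \in \fgmod$, so that $Z$ is a finitely generated graded $S(W)$-module with compatible $GL_n$-action, where $W$ is a finite dimensional graded representation of $GL_n$. First I would observe that $S(W)$ is a finitely generated graded commutative $K$-algebra on which $GL_n$ acts rationally via graded automorphisms, so by (ii) the subalgebra $S(W)^G$ is finitely generated. The key step is then to show that $Z^G$ is finitely generated as a module over $S(W)^G$. The standard device here is to form the semidirect-product-type algebra $S(W) \oplus Z$ (a trivial square-zero extension), in which $Z$ becomes an ideal of square zero; this is a finitely generated graded commutative $K$-algebra carrying a compatible rational $GL_n$-action extending those on $S(W)$ and $Z$. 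Applying (ii) to this larger algebra yields that $(S(W) \oplus Z)^G = S(W)^G \oplus Z^G$ is a finitely generated $K$-algebra, and since $Z^G$ is the square-zero ideal therein, a standard argument (the generators of the algebra in degrees lying in $Z^G$, together with finitely many products, generate $Z^G$ over $S(W)^G$) shows that $Z^G$ is finitely generated as an $S(W)^G$-module. One must check that the trivial extension is genuinely commutative and that the $GL_n$-action respects the multiplication; the compatibility condition $g\cdot(fm) = (g\cdot f)(g\cdot m)$ built into Definition~\ref{def:C} is exactly what guarantees this.

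\medskip

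For the implication (i) $\Rightarrow$ (ii), I would start with a finitely generated graded commutative $K$-algebra $R$ with rational $GL_n$-action by graded algebra automorphisms. Choosing a finite dimensional $GL_n$-stable generating subspace $W$ of $R$ (which exists because the $GL_n$-action is rational and $R$ is finitely generated, so one can enlarge any finite generating set to a $GL_n$-stable one spanning a finite dimensional subspace), I get a $GL_n$-equivariant surjection $S(W) \twoheadrightarrow R$ of graded algebras. This exhibits $R$ as a finitely generated graded $S(W)$-module with compatible $GL_n$-action, i.e.\ $R \in \fgmod$. By (i), $R^G$ is finitely generated as a module over $S(W)^G$. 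Applying (i) also to $Z = S(W)$ itself shows that $S(W)^G$ is a finitely generated $S(W)^G$-module, but this alone does not immediately give finite generation of $S(W)^G$ as an algebra; the cleaner route is to note that $R^G$ being finitely generated as an $S(W)^G$-module reduces algebra finite generation of $R^G$ to algebra finite generation of $S(W)^G$, and then to handle $S(W)^G$ directly. I expect the main obstacle to be this last point: deducing that $S(W)^G$ is a finitely generated \emph{algebra} rather than merely showing module-finiteness statements. The way around it is to apply condition (i) to a single cleverly chosen module that forces algebra generation, or alternatively to observe that since $R$ ranges over all finitely generated graded algebras with rational $GL_n$-action and $S(W)$ is one such, the finite module generation supplied by (i) for $Z=R$, combined with $S(W)^G$ being Noetherian (which follows once we know it is finitely generated as an algebra from the special case $R = S(W)$ treated by the same trivial-extension argument in reverse), closes the loop. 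Concretely, I would make the argument self-contained by proving finite generation of $S(W)^G$ as an algebra from (i) applied to $Z = S(W)$ together with the standard fact that a graded algebra finitely generated as a module over a finitely generated graded subalgebra is itself finitely generated, and then transferring to general $R$ via the equivariant surjection.
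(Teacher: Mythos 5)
Your implication (ii) $\Rightarrow$ (i) is exactly the paper's argument (the square-zero extension $S(W)\oplus Z$) and is fine. The gap is in (i) $\Rightarrow$ (ii). You set it up correctly — choose a finite dimensional $GL_n$-stable homogeneous generating subspace $W\subseteq R_+$, get a $GL_n$-equivariant graded surjection $S(W)\twoheadrightarrow R$, and apply (i) to conclude that $R^G$ is a finitely generated $S(W)^G$-module — but then you reduce the problem to showing that $S(W)^G$ is a finitely generated $K$-algebra, and none of your proposed routes to that actually closes. Applying (i) to $Z=S(W)$ only says $S(W)^G$ is finitely generated as a module over itself (generated by $1$), which, as you yourself note, is vacuous; the "standard fact" you then invoke needs a finitely generated graded subalgebra over which $S(W)^G$ is module-finite, and (i) has not produced one. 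Your alternative ("which follows once we know it is finitely generated as an algebra from the special case $R=S(W)$") is explicitly circular. So as written the proof of (i) $\Rightarrow$ (ii) is incomplete.

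The missing idea is the graded Nakayama Lemma, which makes the detour through algebra-finite-generation of $S(W)^G$ unnecessary. Apply (i) not to $R$ but to the augmentation ideal $R_+$, which lies in $\fgmod$ via the surjection $\pi:S(W)\to R$; this gives finitely many homogeneous $f_1,\dots,f_k$ with $R^G_+=\sum_i \pi\bigl(S(W)^G\bigr)f_i$. Since $\pi$ is $G$-equivariant, $\pi(S(W)^G)\subseteq R^G$, so the $f_i$ generate $R^G_+$ as an ideal of the graded algebra $R^G$, and the graded Nakayama Lemma then says precisely that $f_1,\dots,f_k$ generate $R^G$ as a $K$-algebra. (Your "cleverly chosen module" does exist: it is $S(W)_+$, or more directly $R_+$, and the mechanism that converts module generators of the irrelevant ideal into algebra generators is Nakayama.) This is how the paper argues, and it never needs to know in advance that $S(W)^G$ is a finitely generated algebra — that statement is just the special case $R=S(W)$ of the conclusion.
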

\begin{proof} Suppose (i) holds, and take a $K$-algebra $R$ as in (ii). Denote by $R_+$ the sum of the positive degree homogeneous components in $R$.
Then $R_+$ contains a finite dimensional  $GL_n$-submodule $W$ such that
$W$ is spanned by homogeneous elements, and $W$ generates $R$ as a $K$-algebra.
The identity map $W\to W$ extends to a surjection $S(W)\to R$ of graded $K$-algebras, and this surjection is $G$-equivariant.
So in this way $R_+$ becomes a graded $S(W)$-module in the class $\fgmod$. Take a finite $S(W)^G$-module generating system of $R^G_+$
(it exists by assumption).
This is easily seen to be a finite $K$-algebra generating system of $R^G$ by the graded Nakayama Lemma (see for example Lemma 3.5.1 in \cite{DK}).

Assume next that (ii) holds, and let $Z\in \fgmod$.  We repeat the proof of Theorem 16.9 in \cite{G}: make the direct sum
$R=S(W)\oplus Z$ a graded $K$-algebra by imposing the multiplication $(r,v)\cdot (s,w)=(rs,rw+sv)$ for $r,s\in S(W)$ and $v,w\in Z$.
By assumption the algebra $R^G=S(W)^G\oplus Z^G$ is finitely generated, implying in turn that the $S(W)^G$-module $Z^G$ is finitely generated.
\end{proof}

\begin{remark}\label{remark:zariskiclosure} 
The equivalent conditions (i) and (ii) of Lemma~\ref{lemma:equivalent} hold for a subgroup $G$ of $GL_n$ if and only if they hold for its Zariski closure $\bar{G}$ in $GL_n$. 
Indeed, for a given element $f$ of an algebra $R$ as in (ii), the stabilizer of $f$ in $GL_n$ is 
Zariski closed, since $GL_n$ acts rationally on $R$. Therefore we have $R^G=R^{\bar{G}}$.  
\end{remark} 

We mention two important and well-known sufficient conditions that imply condition (ii) (and hence condition (i)) of Lemma~\ref{lemma:equivalent}:

\begin{proposition}\label{prop:unipotent}
The conditions (i) and (ii) of Lemma~\ref{lemma:equivalent} hold for $G$ in each of the following two cases:
\begin{enumerate}
\item $G$ is a Zariski closed reductive subgroup of $GL_n$.
\item $G$ is the unipotent radical of a Borel subgroup of a Zariski closed reductive subgroup of $GL_n$.
\end{enumerate}
\end{proposition}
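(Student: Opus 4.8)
The plan is to verify condition (ii) of Lemma~\ref{lemma:equivalent} in each of the two cases, since by that lemma (ii) is equivalent to (i). By Remark~\ref{remark:zariskiclosure} we may pass to the Zariski closure, so $G$ may be assumed Zariski closed throughout. Given a finitely generated graded commutative $K$-algebra $R$ with a rational $GL_n$-action, restricting this action to $G$ (respectively to the ambient reductive group in case (2)) yields a rational action of the smaller group, so the task becomes a purely commutative-invariant-theoretic finiteness statement.

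In case (1), where $G$ is reductive, I would invoke the classical Hilbert--Nagata finiteness theorem. Since $\mathrm{char}(K)=0$, the reductive group $G$ is linearly reductive, so there is a Reynolds operator $R\to R^G$ splitting the inclusion $R^G\hookrightarrow R$ as a homomorphism of $R^G$-modules. The standard Hilbert-ideal argument (see for instance \cite{DK} or \cite{G}) then shows that $R^G$ is a finitely generated $K$-algebra, which establishes (ii), and hence (i), in the reductive case.

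In case (2), write $\bar{G}$ for the Zariski closed reductive subgroup of $GL_n$ (replacing it by its identity component if needed, so that it is connected) and $U=G$ for the unipotent radical of a Borel subgroup $B\le \bar{G}$; thus $U$ is a maximal unipotent subgroup of $\bar{G}$. Restricting the $GL_n$-action to $\bar{G}$ makes $R$ a finitely generated commutative $K$-algebra carrying a rational $\bar{G}$-action, and the goal is that $R^U$ be finitely generated. The key external input is Hadzhiev's theorem, asserting that $U$ is a \emph{Grosshans subgroup} of $\bar{G}$, i.e.\ the algebra $K[\bar{G}]^U=K[\bar{G}/U]$ of $U$-invariant regular functions (for the right translation action) is itself finitely generated. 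Granting this, I would apply the transfer principle (see \cite{G}), which provides a $\bar{G}$-equivariant isomorphism
\[
R^U\;\cong\;\bigl(R\otimes_K K[\bar{G}]^U\bigr)^{\bar{G}}.
\]
Since $K[\bar{G}]^U$ is finitely generated, $R\otimes_K K[\bar{G}]^U$ is a finitely generated commutative $K$-algebra equipped with a rational $\bar{G}$-action, so by case (1) applied to the reductive group $\bar{G}$ its ring of $\bar{G}$-invariants is finitely generated. Hence $R^U$ is finitely generated, proving (ii), and so (i), in case (2).

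The main obstacle is the Grosshans property of the maximal unipotent subgroup, namely the finite generation of $K[\bar{G}]^U$ (Hadzhiev's theorem), together with the precise formulation of the transfer isomorphism $R^U\cong(R\otimes_K K[\bar{G}]^U)^{\bar{G}}$ and its compatibility with the gradings. These inputs lie deeper than the elementary reductive case, but are available in the literature on algebraic homogeneous spaces and invariant theory (see \cite{G}); the remainder is a formal reduction followed by a single application of the reductive finiteness theorem.
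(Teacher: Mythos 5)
Your proof is correct and follows essentially the same route as the paper, which for case (1) simply cites the classical Hilbert--Nagata finiteness theorem (Theorem A (a) in \cite{G}) and for case (2) cites the Had\v{z}iev--Grosshans theorem (Theorem 9.4 in \cite{G}); your argument is precisely the standard proof behind those citations, via the Reynolds operator in the reductive case and the Grosshans property of $U$ plus the transfer principle in the unipotent case. Note that the transfer isomorphism $R^U\cong(K[\bar{G}]^U\otimes R)^{\bar{G}}$ you invoke is exactly the one the paper records later as Lemma~\ref{lemma:transfer}.
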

\begin{proof}  (1) is classical, see for example Theorem A (a) on page 3 of \cite{G} and the reference therein.
(2) is due to Had\v{z}iev \cite{H} and Grosshans (in arbitrary characteristic, see Theorem 9.4 in \cite{G}).
\end{proof}

\begin{theorem}\label{thm:condition_a}
Suppose that the subgroup $G$ of $GL_n$ satisfies condition (i) (and hence (ii)) of Lemma~\ref{lemma:equivalent}, and let $Z$
be a graded representation of $GL_n$ for which $\forch_Z(q)$ is a nice rational function 
in the sense of Definition~\ref{def:berele}. 
Then the Hilbert series $H(Z^G,q)\in \mathbb{Z}[[q]]$ of the subspace of $G$-invariants in $Z$ is a rational function
of the form
\[
H(Z^G,q)=\frac{P(q)}{\displaystyle \prod_{j=1}^m(1-q^{d_j})}
\]
for some  positive integers $m,d_1,\ldots,d_m$ and a polynomial
$P\in \mathbb{Z}[q]$.
\end{theorem}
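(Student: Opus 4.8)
The plan is to combine Proposition~\ref{prop:characterization_nice_rational} with the finite generation hypothesis via the exact-sequence technology already used in the proof of that proposition. Since $\forch_Z(q)$ is a nice rational function in the sense of Definition~\ref{def:berele}, Proposition~\ref{prop:characterization_nice_rational} tells me that $\forch_Z(q)$ lies in the $\mathbb{Z}$-span of $\{\forch_V(q)\mid V\in\fgmod\}$; that is, there exist finitely many $V_i\in\fgmod$ and integers $c_i$ with $\forch_Z(q)=\sum_i c_i\,\forch_{V_i}(q)$. Applying the abelian group homomorphism $D$ and using the identity \eqref{eq:hilbert-forch}, I get
\[
H(Z^G,q)=D(\forch_Z(q))=\sum_i c_i\,D(\forch_{V_i}(q))=\sum_i c_i\,H(V_i^G,q).
\]
So the whole problem reduces to showing that for a \emph{single} $V\in\fgmod$ the Hilbert series $H(V^G,q)$ is a rational function with denominator a product of factors $1-q^{d_j}$; the desired form for $H(Z^G,q)$ then follows by taking a common denominator of finitely many such expressions.

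Now I fix $V\in\fgmod$, so $V$ is a finitely generated graded $S(W)$-module with compatible rational $GL_n$-action, for some finite dimensional graded representation $W$. The key point is that condition (i) of Lemma~\ref{lemma:equivalent} applies directly: $V^G$ is a finitely generated graded $S(W)^G$-module. Moreover, by condition (ii) (equivalent to (i)), the algebra $S(W)^G$ is a finitely generated graded commutative $K$-algebra with $(S(W)^G)_0=K$, since $GL_n$ acts rationally on $S(W)$ via graded automorphisms. At this stage I have left the world of $GL_n$-representations entirely and reduced to an ordinary commutative-algebra situation: a finitely generated graded module $V^G$ over a finitely generated graded commutative $K$-algebra $S(W)^G$. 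The classical Hilbert--Serre Theorem (recorded in Remark~\ref{remark:nice_rational}(ii)) then guarantees that $H(V^G,q)$ is a nice rational function in $\mathbb{Z}[[q]]$, i.e.\ a rational function whose denominator is a product of binomials $1-q^{d_j}$.

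Assembling the pieces, each $H(V_i^G,q)$ is a rational function of the stated shape, and a $\mathbb{Z}$-linear combination of finitely many such functions is again of that shape: clearing denominators over the union (as a multiset) of the denominator factors produces a single polynomial numerator $P\in\mathbb{Z}[q]$ over $\prod_{j=1}^m(1-q^{d_j})$. I expect the main obstacle to be purely bookkeeping rather than conceptual: one must be careful that the gradings line up so that $H(V^G,q)$ is genuinely the image under $D$ of $\forch_V(q)$ (this is \eqref{eq:hilbert-forch}), and that the finite generation of $S(W)^G$ really does follow from hypothesis (ii) applied to the polynomial algebra $S(W)$. Neither of these is hard, but they are the two places where the hypotheses of the theorem are actually consumed, so they warrant explicit mention. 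Everything else is the formal manipulation of rational functions with binomial denominators.
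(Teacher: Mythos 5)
Your proposal is correct and follows essentially the same route as the paper's proof: decompose $\forch_Z(q)$ via Proposition~\ref{prop:characterization_nice_rational} (using that a formal character is automatically symmetric), push through $D$ using \eqref{eq:hilbert-forch}, invoke condition (i)/(ii) to get finite generation of $V_i^G$ over $S(W_i)^G$, and finish with the Hilbert--Serre Theorem. No gaps.
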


\begin{proof} Since  $\forch_Z(q)$ is a nice rational function and it is also symmetric in $t_1,\dots,t_n$, by Proposition~\ref{prop:characterization_nice_rational}
there exists a positive integer $r$,
$Z_1,\ldots,Z_r \in \fgmod$ and integers $a_1,\ldots,a_r$ such that
$\displaystyle \forch_Z(q)=\sum_{i=1}^ra_i\forch_{Z_i}$.
By \eqref{eq:hilbert-forch} we have
\begin{equation} \label{eq:H-D}
H(Z^G,q)=D(\forch_Z(q))=\sum_{i=1}^ra_i D(\forch_{Z_i})=\sum_{i=1}^ra_iH(Z_i^G,q).\end{equation}
Here  $Z_i$ is a finitely generated graded $S(W_i)$-module (with compatible $GL_n$-action) for some finite dimensional
graded representation $W_i$ of $GL_n$,  $i=1,\ldots,r$.
Since conditions (i) and (ii) of Lemma~\ref{lemma:equivalent} hold for $G$, we have that  the algebra $S(W_i)^G$ is finitely generated,
and $Z_i^G$ is a finitely generated graded $S(W_i)^G$-module for $i=1,\ldots,r$.  By the Hilbert-Serre Theorem
(see for example Theorem 11.1 in \cite{AM}) the finitely generated $S(W_i)^G$-module $Z_i^G$ has a rational Hilbert series $H(Z_i^G,q)$
of the stated form, implying in turn by \eqref{eq:H-D} that $H(Z^G,q)$ is a rational function of the stated form (i.e. a nice rational function in $\mathbb{Z}[[q]]$).
\end{proof}


\section{Examples in commutative algebra}\label{sec:rationality of Nagata}

We do not know any example of an algebra of commutative polynomial invariants with Hilbert series which is not a rational function.
In this section we compute the Hilbert series of a famous algebra of invariants which is not finitely generated but as it turns out, still has a nice rational Hilbert series.
The first example of a finite dimensional $G$-module $V$ such that the corresponding algebra $K[V]^G=K[x_1,\ldots,x_n]^G$ of invariants
is not finitely generated was found by
Nagata (see \cite{nagata}). A simplification of Nagata's argument (resulting in an example with a group of smaller dimension)
was given by Steinberg \cite{steinberg}.
Here we shall compute the Hilbert series of this latter algebra of invariants.
We refer to the  books \cite{dolgachev}, \cite{mukai} for facts about this algebra of invariants.

Let $C$ be an irreducible cubic algebraic curve in the complex projective plane $\mathbb{P}^2$. The set of non-singular points in $C$ has a well-known
abelian group structure.
Choose  $9$ distinct points $(a_{i1}:a_{i2}:a_{i3})$ in this abelian group such that their sum is not a torsion element
and the matrix $(a_{ij})_{i=1,\ldots,9}^{j=1,2,3}$ has full rank $3$.
Let $G_1$ be the subgroup of $\mathbb{C}^9$ (the direct sum of $9$ copies of the additive group of $\mathbb{C}$) given by
\[
G_1=\{s=(s_1,\ldots,s_9)\mid \sum_{i=1}^9s_ia_{ij}=0,\quad j=1,2,3\}\subset \mathbb{C}^9
\]
so $G\cong \mathbb{C}^6$. Consider also the torus
\[
T=\{t=(t_1,\ldots,t_9)\mid \prod_{i=1}^9t_i=1\}\subset   (\mathbb{C}^\times)^9
\]
so $T\cong (\mathbb{C}^\times)^8$.
The group $G=G_1\times T\subset \mathbb{C}^9\times (\mathbb{C}^\times)^9$ acts on the $18$-variable polynomial algebra
$S=\mathbb{C}[x_1,\ldots,x_9,y_1,\ldots,y_9]$ via
\[
(s,t)\cdot x_i=t_ix_i,\quad (s,t)\cdot y_i=t_i(s_ix_i+y_i),\quad i=1,\ldots,9.
\]
Set
\[
A_j=\sum_{i=1}^9a_{ij}\frac{y_i}{x_i},\quad j=1,2,3, \quad D=x_1\cdots x_9.
\]
Then $A_1,A_2,A_3,D$ generate the subfield of $G$-invariants in the field of fractions of $S$, and in fact
\[
R=S^G=S\cap \mathbb{C}[A_1,A_2,A_3,D,D^{-1}].
\]
Thus $R$ has a bigrading $R=\displaystyle \bigoplus_{d\in\mathbb{N}_0,m\in\mathbb{Z}}R_{(d,m)}$, where
\[
R_{(d,m)}=S\cap \{D^{d-m}f(A_1,A_2,A_3)\mid f\in\mathbb{C}[x,y,z] \text{ is homogeneous of degree }d\}.
\]

\begin{theorem}\label{thm:nagata}
{\rm (i) (Proposition 2.49 in \cite{mukai} or page 60 in \cite{dolgachev})}
The $\mathbb{C}$-algebra $R=S^G$ is not finitely generated.


{\rm (ii) (Lemma 2.50 in \cite{mukai} and Lemma 4.5 in \cite{dolgachev})}
For $d\in \mathbb{N}_0$ and $m\in \mathbb{Z}$ we have
\[
\dim R_{(d,m)}=\begin{cases} \displaystyle\binom{d+2}{2}, & \text{ if }\quad m\le 0;\\
\displaystyle\binom{d+2}{2}-9\binom{m+1}{2}, &\text{ if }\quad d\ge 3m>0;\\
0, &\text{ if }\quad 3m>d.\end{cases}
\]
\end{theorem}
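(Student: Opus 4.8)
I prove part (ii); part (i) is classical (Nagata, Steinberg) and is taken from the cited references. Throughout write $P_i=(a_{i1}:a_{i2}:a_{i3})\in\mathbb{P}^2$ for the nine chosen points, all lying in the smooth locus of $C$, and write $c\in\mathbb{C}[x,y,z]$ for a cubic form defining $C$. The first step is to translate the computation into a question about plane curves. Since $A_1,A_2,A_3,D$ generate the field $\mathrm{Frac}(S)^G$, which has transcendence degree $18-\dim G=4$ over $\mathbb{C}$, these four elements are algebraically independent; in particular $f\mapsto D^{d-m}f(A_1,A_2,A_3)$ is injective on homogeneous forms of degree $d$, so $\dim R_{(d,m)}$ equals the dimension of the space $V_{d,m}$ of those $f$ with $D^{d-m}f(A_1,A_2,A_3)\in S$. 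To identify $V_{d,m}$ I would analyse poles along each divisor $\{x_i=0\}$: writing $A_j=a_{ij}(y_i/x_i)+B^{(i)}_j$ with $B^{(i)}_j=\sum_{l\ne i}a_{lj}y_l/x_l$ regular along $\{x_i=0\}$ and expanding $f$ in powers of $u_i=y_i/x_i$, one gets $f(A_1,A_2,A_3)=\sum_{k}c^{(i)}_k u_i^k$ with $c^{(i)}_k$ proportional to $(a^{(i)}\!\cdot\!\nabla)^kf$ evaluated at $B^{(i)}$. As the $B^{(i)}_j$ are algebraically independent (for points in sufficiently general position), $c^{(i)}_k\ne0$ iff $(a^{(i)}\!\cdot\!\nabla)^kf\ne0$, and since the distinct powers of $x_i$ cannot cancel, the pole order along $\{x_i=0\}$ equals the degree of $f$ in the direction $a^{(i)}$, namely $d-\mathrm{mult}_{P_i}(f)$. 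As $\mathrm{ord}_{x_i=0}(D^{d-m})=d-m$, the product lies in $S$ exactly when $\mathrm{mult}_{P_i}(f)\ge m$ for all $i$. Hence $V_{d,m}$ is the linear system of degree-$d$ plane curves with multiplicity $\ge m$ at each $P_i$, and $\dim R_{(d,m)}=\dim V_{d,m}$.

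The two easy cases follow at once. For $m\le 0$ the conditions are vacuous, so $\dim V_{d,m}=\binom{d+2}{2}$. For $3m>d$ I would use B\'ezout: any $f\in V_{d,m}$ meets $C$ with intersection multiplicity $\ge m$ at each $P_i$, hence with total intersection $\ge 9m>3d=\deg(f)\deg(C)$, forcing $c\mid f$; writing $f=cg$ gives $g\in V_{d-3,m-1}$, where again $3(m-1)>d-3$. Iterating $m$ times identifies $V_{d,m}$ with $V_{d-3m,0}$, a space of forms of negative degree, so $V_{d,m}=0$.

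The substantive range $d\ge 3m>0$ I would treat by induction on $m$ via restriction to $C$. Put $L_{d,m}=\mathcal{O}_C(d)\otimes\mathcal{O}_C(-m\sum_iP_i)$, a line bundle of degree $3d-9m$ on the genus-one curve $C$. With $Z=m\sum_iP_i$ and $Z'=(m-1)\sum_iP_i$ the exact sequence of sheaves
\[
0\to\mathcal{I}_{Z'}(d-3)\xrightarrow{\;\cdot c\;}\mathcal{I}_{Z}(d)\xrightarrow{\;\rho\;}L_{d,m}\to 0
\]
gives on global sections the exact sequence $0\to V_{d-3,m-1}\xrightarrow{\cdot c}V_{d,m}\xrightarrow{\rho}H^0(C,L_{d,m})$, with $\rho$ surjective as soon as $H^1(\mathbb{P}^2,\mathcal{I}_{Z'}(d-3))=0$. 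The base case $m=0$ is clear. For the step, the inductive hypothesis at level $m-1$ in degree $d-3\ge 3(m-1)$ asserts $\dim V_{d-3,m-1}=\binom{d-1}{2}-9\binom{m}{2}$, which is exactly the statement that $Z'$ imposes independent conditions in degree $d-3$, i.e. the required $H^1$-vanishing. Riemann--Roch on $C$ (where $K_C=0$) gives $h^0(L_{d,m})=3d-9m$ for $d>3m$; while for $d=3m$ we have $L_{3m,m}=N^{\otimes m}$ with $N=\mathcal{O}_C(3)\otimes\mathcal{O}_C(-\sum_iP_i)$, corresponding under $\mathrm{Pic}^0(C)\cong C$ to the non-torsion class $\ominus(P_1\boxplus\cdots\boxplus P_9)$, so $N^{\otimes m}\not\cong\mathcal{O}_C$ and $h^0(L_{3m,m})=0$. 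For $d>3m$ this yields
\[
\dim V_{d,m}=\Big[\binom{d-1}{2}-9\binom{m}{2}\Big]+(3d-9m)=\binom{d+2}{2}-9\binom{m+1}{2},
\]
and for $d=3m$, using $h^0(L_{3m,m})=0$, one gets $\dim V_{3m,m}=\binom{3m-1}{2}-9\binom{m}{2}=1=\binom{3m+2}{2}-9\binom{m+1}{2}$, as required.

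The main obstacle is the surjectivity of $\rho$, that is the vanishing $H^1(\mathbb{P}^2,\mathcal{I}_{Z'}(d-3))=0$; this is precisely the independence of the conditions one multiplicity-level down, and is exactly what the induction is arranged to supply. The only genuinely arithmetic ingredient is the non-torsion hypothesis on $P_1\boxplus\cdots\boxplus P_9$, which enters solely to kill $H^0$ of the degree-zero bundle $N^{\otimes m}$ along the extremal ray $d=3m$; without it the asserted value on that ray would fail.
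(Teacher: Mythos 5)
The paper does not prove Theorem~\ref{thm:nagata} at all: both parts are quoted from Mukai (Proposition 2.49, Lemma 2.50) and Dolgachev (p.~60, Lemma 4.5) and used as input for Proposition~\ref{prop:nagata-hilbert}, so there is no in-paper argument to compare against. Your proof of (ii) is correct and is essentially the standard argument of those references: identify $R_{(d,m)}$ with the linear system of degree-$d$ plane curves having multiplicity $\ge m$ at the nine points (via the pole-order computation along each $\{x_i=0\}$), dispose of $3m>d$ by B\'ezout, and handle $d\ge 3m>0$ by induction on $m$ through the restriction sequence to $C$, with Riemann--Roch on $C$ and the non-torsion hypothesis killing $h^0$ of the degree-zero bundle on the ray $d=3m$. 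Two small points worth tightening: the ``sufficiently general position'' you invoke for the algebraic independence of $B^{(i)}_1,B^{(i)}_2,B^{(i)}_3$ is automatic here, since an irreducible cubic contains no line, so any eight of the nine points already span $\mathbb{P}^2$ and the $8\times 3$ coefficient matrix has rank $3$; and since the paper only assumes $C$ irreducible (possibly nodal or cuspidal), the Riemann--Roch step should be phrased via the trivial dualizing sheaf and the generalized Jacobian of the integral Gorenstein curve $C$, which changes nothing in the computation because the nine points lie in the smooth locus.
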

Observe that $R_{(d,m)}$ is contained in the homogeneous component of degree $9(d-m)$ of the polynomial algebra of $S$,
hence we have
\[
H(S,q)=\sum_{n=0}^\infty c_nq^{9n} \text{ where } c_n=\sum_{m\ge -n}\dim R_{(m+n,m)}.
\]
We infer from Theorem~\ref{thm:nagata} (ii)
that
\[
c_n=\sum_{m=1}^{\lfloor \frac{n}{2}\rfloor}\left(\binom{n+m+2}{2}-9\binom{m+1}{2}\right)+\sum_{m=-n}^0\binom{n+m+2}{2}
\]
where $\lfloor \frac{n}{2}\rfloor$ stands for the lower integer part of $\frac{n}{2}$.
Now we have
\[\sum_{m=-n}^0\binom{n+m+2}{2}=\binom{n+3}{3},\]
for $n=2s>0$ even we have
\[\sum_{m=1}^s\dim R_{(2s+m,m)}=\frac{1}{6}(10s^3+3s^2-7s)=10\binom{s}{3}+11\binom{s}{2}+s,\]
and for $n=2s+1$ odd we have
\[\sum_{m=1}^s\dim R_{(2s+1+m,m)}=\frac{1}{6}(10s^3+18s^2+8s)=10\binom{s}{3}+16\binom{s}{2}+6s.\]
We obtain
\begin{eqnarray*} \sum_{n=0}^\infty c_nq^n &=&\sum_{n=0}^\infty\binom{n+3}{3}q^n
\\ &+&\sum_{s=1}^\infty(10\binom{s}{3}+11\binom{s}{2}+s)q^{2s}
\\ &+& \sum_{s=0}^\infty (10\binom{s}{3}+16\binom{s}{2}+6s)q^{2s+1}
\\ &=& \frac{1}{(1-q)^4}+\frac{10q^6+10q^7}{(1-q^2)^4}+\frac{11q^4+16q^5}{(1-q^2)^3}+\frac{q^2+6q^3}{(1-q^2)^2}
\\ &=& \frac{1+4q+7q^2+10q^3+10q^4+4q^5}{(1-q^2)^4}.
\end{eqnarray*}
Summarizing, we found the following:

\begin{proposition}\label{prop:nagata-hilbert}
We have the equality
\[H(S^G,q)=\frac{1+4q^9+7q^{18}+10q^{27}+10q^{36}+4q^{45}}{(1-q^{18})^4}.\]
In particular, $H(S^G,q)$ is a nice rational function.
\end{proposition}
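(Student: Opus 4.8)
The plan is to reduce the statement to the single-variable generating function computation already carried out just above the proposition. Recall that every homogeneous element of $R_{(d,m)}$ sits in degree $9(d-m)$ of $S$, so that $H(S^G,q)=\sum_{n\ge 0}c_nq^{9n}$ with $c_n=\sum_{m\ge -n}\dim R_{(m+n,m)}$. In other words, writing $F(q)=\sum_{n\ge 0}c_nq^n$ for the auxiliary series in which the degree-$9$ scaling has been stripped off, we have the clean relation $H(S^G,q)=F(q^9)$. Thus it suffices to determine $F(q)$ in closed form and then perform the substitution $q\mapsto q^9$.

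To obtain $F(q)$ I would feed the three-case dimension formula of Theorem~\ref{thm:nagata}(ii) into the expression for $c_n$. Under the substitution $d=m+n$ the sum splits into a part over $m\le 0$, which telescopes by the hockey-stick identity to $\binom{n+3}{3}$, and a part over $1\le m\le\lfloor n/2\rfloor$ (the range forced by the condition $d\ge 3m$), which is a finite sum of binomial coefficients whose value depends on the parity of $n$. Evaluating these polynomial-in-$s$ sums for $n=2s$ and $n=2s+1$ separately, and then summing the resulting series using the standard identity $\sum_{s\ge 0}\binom{s}{k}x^{s}=x^{k}/(1-x)^{k+1}$ (with $x=q^2$ for both the even and odd contributions), yields a sum of four partial fractions over powers of $1-q^2$. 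Combining them over the common denominator $(1-q^2)^4$ produces the numerator $1+4q+7q^2+10q^3+10q^4+4q^5$, i.e.\ the closed form for $F(q)$.

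With $F(q)=\frac{1+4q+7q^2+10q^3+10q^4+4q^5}{(1-q^2)^4}$ in hand, substituting $q\mapsto q^9$ immediately gives the asserted formula, since each $q^k$ in the numerator becomes $q^{9k}$ and $(1-q^2)^4$ becomes $(1-q^{18})^4$. For the concluding assertion I would invoke Remark~\ref{remark:nice_rational}(ii): the denominator $(1-q^{18})^4$ is a product of binomials of the form $1-q^k$ (four copies of $1-q^{18}$), so the rational function lies in $\mathbb{Z}[[q]]$ with the prescribed denominator shape and is therefore nice.

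The argument contains no genuine conceptual obstacle: the dimension formula of Theorem~\ref{thm:nagata}(ii) does all the real work, and what remains is bookkeeping. The only point demanding care is the parity split in evaluating the inner binomial sums and the subsequent clearing of denominators into a single fraction; a sign or off-by-one slip there would corrupt the numerator, so I would double-check the even and odd polynomials against the target numerator $1+4q+7q^2+10q^3+10q^4+4q^5$, for instance by comparing the first several coefficients $c_n$ computed directly from the formula for $\dim R_{(d,m)}$.
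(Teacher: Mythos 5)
Your proposal is correct and follows essentially the same route as the paper: reduce to the auxiliary series $F(q)=\sum_n c_nq^n$ via the degree-$9$ rescaling, split the sum over $m$ into the $m\le 0$ part (hockey-stick, giving $\binom{n+3}{3}$) and the $1\le m\le\lfloor n/2\rfloor$ part with a parity split on $n$, sum the resulting binomial series over powers of $1-q^2$, and substitute $q\mapsto q^9$. The appeal to Remark~\ref{remark:nice_rational}(ii) for the niceness claim is also the intended justification, so nothing further is needed beyond carrying out the bookkeeping you describe.
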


\begin{problem} \label{problem:strong_nagata}
Does there exist a $G$-module $V$ such that $K[V]^G=K[x_1,\ldots,x_n]^G$ has a non-rational Hilbert series?
\end{problem}

There are many examples of subalgebras of the polynomial algebra in several variables which are not finitely generated.
See e.g., \cite{nathanson} for a family of subalgebras between $K[x_1]$ and $K[x_1,x_2]$ which are not finitely generated as
$K[x_1]$-algebras. It is also easy to construct a graded subalgebra of $K[x_1,x_2]$ with transcendental Hilbert series.
One of the few general abstract properties that all subalgebras of invariants in the polynomial algebra
do have is that they are normal (integrally closed in their field of
fractions). Below we give an example showing that normality does not imply
rationality of the Hilbert series.

\begin{example}
Fix a positive irrational real number $\alpha$ and  consider the subalgebra
$A=\mathrm{Span}_K\{x_1^ix_2^j\mid j<\alpha i\}$ of $K[x_1,x_2]$.
The algebra $A$ is not finitely generated (see for example \cite{nathanson}).
The number of degree $d\ge 1$ monomials in $A$ is $\vert\{i\mid i\le d,\quad  d-i<\alpha i\}\vert$, which is the number of integers in the interval
$\displaystyle (\frac{d}{1+\alpha},d]$.
Set $\displaystyle \beta=1-\frac{1}{1+\alpha}=\frac{\alpha}{\alpha+1}$.
The Hilbert series of $A$ is
\[
H(A,q)=1+\sum_{d=1}^\infty(1+\lfloor d\beta \rfloor  )q^d=\frac{1}{1-q}+\sum_{d=1}^\infty \lfloor d\beta \rfloor  q^d,
\]
where $\lfloor d\beta \rfloor  $ stands for the lower integer part of $d\beta$.  Suppose that this power series is algebraic over $\mathbb{Z}(q)$. Then
$\displaystyle H_1(q)=\sum_{d=1}^\infty \lfloor d\beta \rfloor  q^d$ is algebraic as well, implying in turn that
$\displaystyle H_2(q)=(1-q)H_1(q)=\sum_{d=1}^\infty (\lfloor d\beta \rfloor  -\lfloor (d-1)\beta\rfloor)q^d$
is algebraic.
Since $0<\beta<1$, we have $c_d=\lfloor d\beta \rfloor  -\lfloor (d-1)\beta\rfloor \in \{0,1\}$, so the coefficients of $H_2(q)$ take only two integer values.
By the classical theorem of Fatou \cite{Fa}, see Borwein and Coons \cite{BC} for an easy proof, $H_2(q)$ must be a rational function.
As in the proof of Theorem 2 in \cite{BC}, this implies that the sequence $(c_d\mid d=1,2,\ldots)$ is eventually periodic.
This contradicts the assumption that $\beta$ is irrational. Indeed, suppose that there exist positive integers $N,p$ such that $c_{d+p}=c_d$ for all $d\ge N$.
Set $s=\vert\{1\le i \le p\mid c_{N+i}=1\}\vert$. Then for any positive integer $k$ we have
\[
\lfloor (N+kp)\beta \rfloor-\lfloor N\beta\rfloor =\sum_{i=N+1}^{N+kp}c_i=ks.
\]
We conclude that $\vert kp\beta-ks\vert\le 1$, and hence
$\displaystyle \vert\beta-\frac{s}{p}\vert\le \frac{1}{kp}$
holds for all $k$. This leads to the contradiction $\displaystyle \beta=\frac{s}{p}$.

We claim that $A$ is normal. Indeed, note that $A$ is the semigroup algebra of the subsemigroup $S=\{(i,j)\mid j\le \alpha i\}$
of the additive semigroup $\mathbb{N}_0^2$. The semigroup $S$ is saturated in $\mathbb{N}_0^2$, i.e.,
if $ms\in S$ for some $s\in \mathbb{N}_0^2$ and $m\in \mathbb{N}$ then $s\in S$.
Suppose that some element $f$ from the field of fractions of $A$ is integral over $A$.
Then  there exists a finitely generated subsemigroup $S'\subset S$ such that $f$ is integral over the subalgebra
$K[S']=\mathrm{Span}_K\{x_1^ix_2^j\mid (i,j)\in S'\}$ of $A$ (take all monomials that occur in the coefficients
of the monic polynomial in $A[x]$ satisfied by $f$). So $f$ belongs to the normalization of $K[S']$, which is $K[S'_{\mathrm{sat}}]$, where
$S'_{\mathrm{sat}}=\mathbb{Q}_{\ge 0}S'\cap \mathbb{N}_0^2$ is the saturation of $S'$ (see for example Proposition 7.25 in \cite{miller-sturmfels}).
Now since $S$ is saturated, we have
$S'_{\mathrm{sat}}\subset S$, so $f\in K[S]=A$, showing that $A$ is integrally closed in its field of fractions.
\end{example}


\section{Applications in noncommutative invariant theory}\label{sec:applications}

Let $\mathfrak{R}$ be a proper subvariety of the variety of all associative $K$-algebras.
Denote by $F_n(\mathfrak{R})$ the relatively free algebra of rank $n$  in the variety $\mathfrak{R}$.
Then $F_n(\mathfrak{R})=K\langle V\rangle/I$ is the factor algebra of the tensor algebra  $K\langle V\rangle$ of an  $n$-dimensional vector space $V$
modulo a non-zero T-ideal $I$. Choosing a basis $\{x_1,\ldots,x_n\}$ in $V$ we identify $GL_n$ with $GL(V)$
and the tensor algebra $K\langle V\rangle$ with
the free associative algebra $K\langle x_1,\ldots,x_n\rangle$. This is graded as usual, so the generators $x_i$ have degree $1$.
Since $I$ is necessarily homogeneous,
there is an induced grading on $F_n(\mathfrak{R})$,  and  $\displaystyle F_n(\mathfrak{R})=\bigoplus_{d=0}^\infty F_n(\mathfrak{R})_d$ is a graded
representation of $GL_n$ in the sense of Section~\ref{sec:nice_rational}. For any subgroup $G\le GL_n$, the subalgebra
$\displaystyle F_n(\mathfrak{R})^G=\bigoplus_{d=0}^\infty F_n(\mathfrak{R})^G_d$ is spanned by homogeneous elements, so it is also graded.
Its Hilbert series is defined as
\[
H(F_n(\mathfrak{R})^G,q)=\sum_{d=0}^\infty \dim(F_n(\mathfrak{R})^G_d)q^d\in \mathbb{Z}[[q]].
\]

\begin{theorem}\label{thm:main}
Suppose that condition (i) (and hence (ii)) of Lemma~\ref{lemma:equivalent} holds for the subgroup $G$ of $GL_n$.
Then the Hilbert series $H(F_n(\mathfrak{R})^G,q)$ of the subalgebra of $G$-invariants in $F_n(\mathfrak{R})$ is rational of the form
\[
H(F_n(\mathfrak{R})^G,q)=\frac{P(q)}{\displaystyle \prod_{j=1}^m(1-q^{d_j})}
\]
for some  positive integers $m,d_1,\ldots,d_m$ and a polynomial
$P\in \mathbb{Z}[q]$.
\end{theorem}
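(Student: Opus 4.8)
The plan is to reduce Theorem~\ref{thm:main} to the already-established Theorem~\ref{thm:condition_a} by verifying that the graded $GL_n$-representation $Z=F_n(\mathfrak{R})$ satisfies the hypotheses of that theorem. Since $G$ is assumed to satisfy condition (i) of Lemma~\ref{lemma:equivalent}, and $F_n(\mathfrak{R})$ is a graded representation of $GL_n$ (as noted in the paragraph preceding the statement), with $H(F_n(\mathfrak{R})^G,q)=H(Z^G,q)$ in the notation of Section~\ref{sec:hilbert_series}, the only thing left to check is the crucial hypothesis that $\forch_{F_n(\mathfrak{R})}(q)$ is a \emph{nice rational function} in the sense of Definition~\ref{def:berele}. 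Once this is in place, Theorem~\ref{thm:condition_a} applies verbatim and delivers the desired rational form of the Hilbert series.

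The key input is the theorem of Belov~\cite{Bel} and Berele~\cite{Ber} announced in the introduction: the multivariate Hilbert series of a relatively free algebra is nice rational. First I would recall precisely what the multivariate (fine) Hilbert series of $F_n(\mathfrak{R})$ is. Grading $F_n(\mathfrak{R})$ by the free abelian monoid $\mathbb{N}_0^n$ according to the multidegree in $x_1,\dots,x_n$, one forms the generating function $\sum_{\gamma\in\mathbb{N}_0^n}(\dim F_n(\mathfrak{R})_\gamma)\,t_1^{\gamma_1}\cdots t_n^{\gamma_n}$. The Belov--Berele result says this lies in the class of nice rational functions in the variables $t_1,\dots,t_n$ in the sense of~\cite{Ber}. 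I would then connect this multivariate series to the character power series $\forch_{F_n(\mathfrak{R})}(q)$: the formal character of the homogeneous component $F_n(\mathfrak{R})_d$ records, for a diagonal matrix $\mathrm{diag}(t_1,\dots,t_n)$, the trace of its action, which by the monomial basis is exactly the sum of $t_1^{\gamma_1}\cdots t_n^{\gamma_n}$ over multidegrees $\gamma$ with $|\gamma|=d$. Hence $\forch_{F_n(\mathfrak{R})}(q)$ is obtained from the multivariate Hilbert series by the substitution $t_i\mapsto t_i q$, which tracks total degree via the exponent of $q$.

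This substitution is precisely the operation recorded in Remark~\ref{remark:nice_rational}(iii): if $f(t_1,\dots,t_n)$ is nice rational in the sense of~\cite{Ber}, then $f(t_1q,\dots,t_nq)$ is nice rational in the sense of Definition~\ref{def:berele}. Applying this to the multivariate Hilbert series yields that $\forch_{F_n(\mathfrak{R})}(q)$ is a nice rational function in the required sense, completing the verification of the hypothesis. I expect the main (and only genuine) obstacle to be stating the Belov--Berele theorem in exactly the form needed and making the identification of $\forch_{F_n(\mathfrak{R})}(q)$ with the degree-graded substitution $t_i\mapsto t_iq$ of the multivariate series fully precise; both the compatibility of the $\mathbb{N}_0^n$-grading with the $GL_n$-action and the bookkeeping relating the two notions of niceness must be handled carefully, but Remark~\ref{remark:nice_rational}(iii) already isolates the bridge. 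Everything else is a direct citation of Theorem~\ref{thm:condition_a}.
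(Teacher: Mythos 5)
Your proposal is correct and follows essentially the same route as the paper's own proof: both reduce to Theorem~\ref{thm:condition_a} by identifying $\forch_{F_n(\mathfrak{R})}(q)$ with the substitution $t_i\mapsto t_iq$ in the multivariate Hilbert series (justified via the monomial eigenbasis for the diagonal torus) and then invoking the Belov--Berele niceness theorem together with Remark~\ref{remark:nice_rational}(iii). No gaps.
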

\begin{proof}  To simplify notation set  $R=F_n(\mathfrak{R})$.
An  $\mathbb{N}_0^n$-grading on the free associative algebra $K\langle x_1,\ldots,x_n\rangle$
is defined by setting the multidegree of $x_i$ the $i$th standard basis vector $(0,\ldots,0,1,0,\ldots,0)$ in $\mathbb{Z}^n$.
This induces a multigrading $\displaystyle R=\bigoplus_{\alpha\in\mathbb{N}_0^n}R_{\alpha}$, and the {\it multivariate Hilbert series}
of $R$ is
\[
H(R,t_1,\ldots,t_n)=\sum_{\alpha\in\mathbb{N}_0^n}\dim(R_{\alpha})t_1^{\alpha_1}\cdots t_n^{\alpha_n}.
\]
Keep the notation $x_i$ for the image of $x_i$ under the natural surjection $K\langle x_1,\ldots,x_n\rangle \to R$.
Then there is a subset $S\subset \{1,\ldots,n\}^d$ such that the monomials $\{x_{i_1}\cdots x_{i_d}\mid (i_1,\ldots,i_d)\in S\}$
constitute  a basis of $R_{\alpha}$. Note that this basis of $R_{\alpha}$ consists of joint eigenvectors of the subgroup of
diagonal matrices in $GL_n$, since we have
\[
\mathrm{diag}(z_1,\ldots,z_n)\cdot x_i=z_ix_i,
\]
and consequently
\[
\mathrm{diag}(z_1,\ldots,z_n)\cdot x_{i_1}\cdots x_{i_d}=z_1^{\alpha_1}\cdots z_n^{\alpha_n}x_{i_1}\cdots x_{i_d}.
\]
For a fixed $d\in \mathbb{N}_0$  the degree $d$ homogeneous component  $R_d$ of
$R$ is $GL_n$-stable, and by the above considerations the sum of the terms  of $H(R,t_1,\ldots,t_n)$ of total degree $d$
is the formal character $\forch_{R_d}$ of $R_d$.
It follows that substituting $t_i\mapsto t_iq$ in the multivariate Hilbert series of $R$ we obtain
\[
H(R,qt_1,\ldots,qt_n)=\forch_{R}(q),
\]
the formal character of the graded $GL_n$-representation $R$.
Belov \cite{Bel} proved that the univariate Hilbert series $H(R,t)$ of the graded algebra $R$ is rational.
Berele strengthened this statement  in Theorem 1 of \cite{Ber}  by showing that the multigraded Hilbert series
$H(R,t_1,\ldots,t_n)$ of $R$ is nice (in the sense of \cite{Ber}), which implies that   $H(R,qt_1,\ldots,qt_n)$ is a   nice rational function in the
sense of Definition~\ref{def:berele}.
Summarizing, the formal character $\forch_{R}(q)$ is a nice rational function in the
sense of Definition~\ref{def:berele}, thus the statement of our theorem now follows from Theorem~\ref{thm:condition_a}.
\end{proof}

\begin{remark}\label{remark:rational_nonfinite} In contrast with  commutative invariant theory, the algebra $F_n(\mathfrak{R})^G$
is typically not finitely generated, even if
condition (i) and (ii) of Lemma~\ref{lemma:equivalent} hold for $G$. For example, by the proof of Theorem 3.1 in \cite{DD},
if $\mathfrak{R}$ is not a subvariety of a variety of Lie nilpotent algebras with a given index of Lie nilpotency,
then for any $n\ge 2$ there is a Zariski closed subgroup $G$  of $GL_n$ isomorphic to the multiplicative group of $K$ such that
$F_n(\mathfrak{R})^G$ is not finitely generated.
Therefore  Theorem~\ref{thm:main} yields interesting examples of naturally occurring
graded algebras, that are not finitely generated,
but have rational Hilbert series like
finitely generated commutative graded algebras.
\end{remark}


\section{Examples}\label{sec:examples}

\begin{example}\label{example:adf}
We provide two examples showing that in Theorem~\ref{thm:condition_a} the condition that $\forch_Z(q)$ is a \emph{nice}
rational function can not be relaxed to the weaker condition that
$\forch_Z(q)$ is a rational function.

(i) Let $V$ be $2$-dimensional and $G=SL(V)$, the special linear subgroup of $GL(V)$. In this case $G$
is a Zariski closed reductive subgroup of $GL(V)$. The tensor algebra $K\langle V\rangle$ has rational Hilbert series
\[H(K\langle V\rangle, t_1,t_2)=\frac{1}{1-t_1-t_2}, \]
whereas by Example 5.10 in \cite{ADF} we have that the subalgebra of $SL(V)$-invariants has non-rational Hilbert series
\[H(K\langle V\rangle^{SL(V)},q)=\frac{1}{2q^2}(1-\sqrt{1-4q^2})
=\sum_{n=0}^{\infty}\frac{1}{n+1}\binom{2n}{n}q^n.\]
Further examples of subalgebras of $SL_2$-invariants in free algebras $K\langle x_1,\ldots,x_n\rangle$
having non-rational Hilbert series can be found in \cite{ADF}.

(ii) Again let $V=\mathrm{Span}_K\{x,y\}$ be $2$-dimensional, and take
\[G=\{x\mapsto x,\   y\mapsto cx+y\mid c\in K\}\subset GL(V).\]
In this case $G$ is the unipotent radical of a Borel subgroup of $GL(V)$, and in fact $G$ is isomorphic to  the additive group of $K$.
By Proposition 5.3 in  \cite{DG}
we have that
\[
H((K\langle V\rangle)^G,q)=\sum_{p=0}^{\infty}
\left(\binom{2p}{p}q^{2p}+\binom{2p+1}{p}q^{2p+1}\right)
\]
\[
=\frac{1-\sqrt{1-4q^2}}{2q^2}\cdot
\frac{1}{\displaystyle 1-\frac{1-\sqrt{1-4q^2}}{2q}}.
\]
In particular, $H(K\langle V\rangle^G,q)$ is not rational.
\end{example}

\begin{example}\label{example:generic_two_by_two}
Here we compute the Hilbert series of a particular algebra of invariants  in a relatively free algebra along the lines of the proof of Theorem~\ref{thm:main}.
Let $V=\mathrm{Span}_K\{x,y\}$ be $2$-dimensional, let $I$ be the T-ideal in $K\langle V\rangle$ of $2$-variable identities of the algebra $K^{2\times 2}$ of $2\times 2$ matrices over
$K$, and $G=K^\times $, the multiplicative group of the base field acting on $V$ via
$z\cdot (ax+by)=zax+z^{-1}by$ ($z\in K^\times$, $a,b\in K$).
The Hilbert series of $R=K\langle V\rangle/I$ was computed in \cite{FHL}:
\begin{equation}\label{eq:fhl}
H(R,t_1,t_2)=\frac{1}{(1-t_1)(1-t_2)}+\frac{t_1t_2}{(1-t_1)^2(1-t_2)^2(1-t_1t_2)}.
\end{equation}
Taking into account that $\forch_{R}(q)=H(R,t_1q,t_2q)$ we deduce from \eqref{eq:fhl} the equality
\begin{equation} \label{eq:Z}
\forch_{R}(q)=\forch_Z(q)
\end{equation}
where $Z$ is the polynomial $GL(V)$-module
\[
Z=S(V)\oplus (\bigwedge^2(V)\otimes S(V\oplus V)\otimes S(\bigwedge^2(V))).
\]
Here $\displaystyle \bigwedge^2(V)$ is the exterior tensor square of $V$, whereas $S(V)$ and  $S(V\oplus V)$ are the symmetric tensor algebras   of $V$
and $V\oplus V$. The grading is given by the decomposition into isotypic components with respect to the action of the subgroup of scalar transformations in $GL(V)$.
Note that $G$ acts trivially on $\displaystyle \bigwedge^2(V)$, hence
\[
Z^G=S(V)^G\oplus (\bigwedge^2(V)\otimes S(V\oplus V)^G\otimes S(\bigwedge^2(V))).
\]
Clearly, $S(V)^G=K[xy]$ is a polynomial algebra generated by a degree $2$ element, and
$S(V\oplus V)^G=K[x_1,y_1,x_2,y_2]^G=K[x_1y_1,x_1y_2,x_2y_1,x_2y_2]$.
It is well known that the ideal of relations between the generators $x_iy_j$, $1\le i,j\le 2$, of $S(V\oplus V)^G$
is a principal ideal generated by the relation
$(x_1y_1)(x_2y_2)=(x_1y_2)(x_2y_1)$. It follows that
$\displaystyle H(S(V\oplus V)^G,q)=\frac{1-q^4}{(1-q^2)^4}$.
By \eqref{eq:hilbert-forch} and \eqref{eq:Z} we obtain
\[
H(R^G,q)=D(\forch_{R}(q))=
D(\forch_Z(q))=H(Z^G,q)=\frac{1}{1-q^2}+\frac{q^2(1+q^2)}{(1-q^2)^4}.
\]
Note that $R^G$ is not a finitely generated $K$-algebra. Indeed, $I$ is contained in $C^2$, the square of the commutator ideal of $K\langle V\rangle$,
since $C^2$ is known to be the T-ideal of $2$-variable identities of the subalgebra of upper triangular matrices in $K^{2\times 2}$
(see \cite{Ma}).
Thus $K\langle V\rangle/C^2$ is a homomorphic image of $R$, and hence $(K\langle V\rangle/C^2)^G$
is a homomorphic image of $R^G$ (recall that $G$ is linearly reductive). Moreover, using an example from \cite{Kh}
it was pointed out in the proof of Theorem 3.1 in \cite{DD} that
$(K\langle V\rangle/C^2)^G$ is not finitely generated, implying in turn that $R^G$ is not finitely generated.
\end{example}

\begin{example}
In some cases by a  non-commutative variant of the Molien-Weyl theorem the operator $D$ from Section~\ref{sec:hilbert_series}
can be evaluated using the Weyl integration formula and residue calculus.
An instance where the Hilbert series of the subalgebra of invariants in a relatively free algebra was computed
that way occurs in  \cite{Do2}.
Namely,
take for $V$ the space of $2\times 2$ matrices over $K$, on which $G=GL_2$ acts by conjugation, and let
$I$ be the T-ideal in $K\langle V\rangle=K\langle x_1,x_2,x_3,x_4\rangle$ generated by $[[x_1,x_2],x_2]$.
It was shown in Lemma 2.1 of \cite{Do2} that
\[
H((K\langle V\rangle/I)^{GL_2},t)=\frac{1+2t^3+t^4}{(1-t)(1-t^2)}.
\]
In that case the algebra $K\langle V\rangle /I$ is Noetherian and hence  $(K\langle V\rangle/I)^{GL_2}$ is finitely generated  (see \cite{DD}).
Explicit generators are also found in \cite{Do2}.
\end{example}


\section{Transfer principle}\label{sec:transfer}

In this section the characteristic of the base field $K$ can be positive, but we assume that $K$ is algebraically closed.
Let $G$ be a linear algebraic group with coordinate ring $K[G]$, and let $H$ be a  subgroup of $G$. Let $R$ be an associative $K$-algebra
on which $G$ acts rationally via $K$-algebra automorphisms. Then $G\times H$ acts  on $K[G]\otimes R$ via $K$-algebra automorphisms as follows:
\[
(g,h)\cdot \sum_i f_i \otimes r_i=\sum_i f_i(g^{-1}xh)\otimes gr_i\text{ for }g\in G,h\in H,\ f_i\in K[G], \ r_i\in R,
\]
where we write $f_i(g^{-1}xh)\in K[G]$ for the function $G\to K$, $x\mapsto f_i(g^{-1}xh)$.
Using the embeddings $G\to G\times H$, $g\mapsto (g,1)$ and $H\to G\times H$, $h\mapsto (1,h)$,  the $G\times H$-action on $K[G]\otimes R$
restricts to commuting actions of $G$ and $H$ on $K[G]\otimes R$. Recall that the subalgebra $(K[G]\otimes R)^G$ of $G$-invariants
is stable under the $H$-action, and similarly, the subalgebra
$(K[G]\otimes R)^H=K[G]^H\otimes R$ of $H$-invariants is $G$-stable.
Clearly we have
\[
(K[G]^H\otimes R)^G=((K[G]\otimes R)^H)^G=(K[G]\otimes R)^{G\times H}=((K[G]\otimes R)^G)^H.
\]

\begin{lemma}\label{lemma:transfer}
Denote by $\Phi:(K[G]\otimes R)^G\to R$  the restriction to $(K[G]\otimes R)^G$ of the linear map $K[G]\otimes R\to R$ given
by $f\otimes r\mapsto f(1_G)r$. Then we have the following:
\begin{itemize}
\item[(i)] $\Phi$ is an $H$-equivariant $K$-algebra isomorphism.
\item[(ii)] $\Phi$ restricts to a $K$-algebra isomorphism $(K[G]^H\otimes R)^G\to R^H$.
\end{itemize}
\end{lemma}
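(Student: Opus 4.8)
The plan is to prove (i) by exhibiting an explicit two-sided inverse to $\Phi$, and then to deduce (ii) formally from (i) together with the chain of equalities displayed just before the lemma. Throughout I would work with the evaluation maps: for $g\in G$ let $\mathrm{ev}_g\colon K[G]\otimes R\to R$ be $\epsilon_g\otimes\mathrm{id}_R$, where $\epsilon_g\colon K[G]\to K$ is evaluation at $g$; thus $\mathrm{ev}_g(f\otimes r)=f(g)r$, each $\mathrm{ev}_g$ is a $K$-algebra homomorphism, and $\Phi$ is the restriction of $\mathrm{ev}_{1_G}$ to $(K[G]\otimes R)^G$. Since $K$ is algebraically closed, $K[G]$ separates the points of $G$, so the family $\{\mathrm{ev}_g\}_{g\in G}$ is jointly injective on $K[G]\otimes R$.

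The first step is to record how the two actions interact with the evaluation maps. A direct computation from the definitions gives, for the $G$-action (coming from $g_0\mapsto(g_0,1)$, i.e. left translation on $K[G]$ together with the given action on $R$),
\[
\mathrm{ev}_g(g_0\cdot u)=g_0\cdot\mathrm{ev}_{g_0^{-1}g}(u),
\]
and for the $H$-action (coming from $h\mapsto(1,h)$, i.e. right translation on $K[G]$ and trivial on $R$),
\[
\mathrm{ev}_g(h\cdot u)=\mathrm{ev}_{gh}(u).
\]
Applying the first identity to a $G$-invariant $u$ and setting $g=g_0$ yields the crucial formula $\mathrm{ev}_g(u)=g\cdot\Phi(u)$ valid for all $g\in G$. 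This formula immediately shows that $\Phi$ is injective on $(K[G]\otimes R)^G$ (if $\Phi(u)=0$ then all $\mathrm{ev}_g(u)=0$, whence $u=0$ by joint injectivity), while $\Phi$ is multiplicative and unital because $\mathrm{ev}_{1_G}$ is an algebra homomorphism; so $\Phi$ is an injective $K$-algebra homomorphism.

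For surjectivity I would construct the inverse $\Psi\colon R\to(K[G]\otimes R)^G$. Given $r\in R$, rationality of the $G$-action places $r$ in a finite dimensional $G$-stable subspace, so the orbit map $g\mapsto g\cdot r$ is a morphism and corresponds to a well-defined element $\Psi(r)\in K[G]\otimes R$ characterized by $\mathrm{ev}_g(\Psi(r))=g\cdot r$ for all $g$. Using the first interaction identity one checks $\mathrm{ev}_g(g_0\cdot\Psi(r))=g_0\cdot((g_0^{-1}g)\cdot r)=g\cdot r=\mathrm{ev}_g(\Psi(r))$, so $\Psi(r)$ is $G$-invariant by joint injectivity, and $\Phi(\Psi(r))=\mathrm{ev}_{1_G}(\Psi(r))=r$. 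Conversely the crucial formula gives $\mathrm{ev}_g(\Psi(\Phi(u)))=g\cdot\Phi(u)=\mathrm{ev}_g(u)$, so $\Psi\circ\Phi=\mathrm{id}$ and $\Phi$ is bijective. Finally, for $H$-equivariance I would combine the second interaction identity with the crucial formula: for $G$-invariant $u$ and $h\in H$, the element $h\cdot u$ is again $G$-invariant (the actions commute), and $\mathrm{ev}_g(h\cdot u)=\mathrm{ev}_{gh}(u)=(gh)\cdot\Phi(u)=g\cdot(h\cdot\Phi(u))$, whereas the crucial formula applied to $h\cdot u$ gives $\mathrm{ev}_g(h\cdot u)=g\cdot\Phi(h\cdot u)$; taking $g=1_G$ yields $\Phi(h\cdot u)=h\cdot\Phi(u)$. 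This proves (i).

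For (ii), the chain of equalities displayed immediately before the lemma identifies $(K[G]^H\otimes R)^G$ with $((K[G]\otimes R)^G)^H$, the subspace of $H$-fixed points in $(K[G]\otimes R)^G$ for the right-translation $H$-action. Since (i) shows that $\Phi$ is an $H$-equivariant $K$-algebra isomorphism, it carries $H$-invariants isomorphically onto $H$-invariants, hence restricts to a $K$-algebra isomorphism $((K[G]\otimes R)^G)^H\to R^H$, which is precisely assertion (ii). The only genuinely delicate point in the whole argument is the construction of $\Psi$ and the verification that it lands in the $G$-invariants; this is where rationality of the action (to make sense of $g\mapsto g\cdot r$ as an element of $K[G]\otimes R$) and the joint injectivity of the $\mathrm{ev}_g$ (from $K$ being algebraically closed) are both essential.
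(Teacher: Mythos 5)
Your argument is correct, but it is worth noting that the paper does not actually prove this lemma: its entire proof is a citation of Theorem 9.1 in Grosshans's book \cite{G}, together with the remark that the proof given there for commutative $R$ works verbatim without commutativity. What you have written is a complete, self-contained reconstruction of that standard transfer argument: your $\Psi$ is the coaction map $r\mapsto (g\mapsto g\cdot r)$, your two interaction identities for $\mathrm{ev}_g$ against the left- and right-translation actions are exactly the computations needed, and the ``crucial formula'' $\mathrm{ev}_g(u)=g\cdot\Phi(u)$ for $G$-invariant $u$ is the heart of the matter. Your write-up has the virtue of making explicit the two points the paper's one-line proof leaves implicit: rationality of the action is precisely what places $\Psi(r)$ in $K[G]\otimes R$ (rather than in some larger space of $R$-valued functions on $G$), and the only ring-theoretic input concerning $R$ is that $\mathrm{ev}_g=\epsilon_g\otimes\mathrm{id}_R$ is multiplicative, which visibly needs no commutativity --- thereby confirming the paper's parenthetical claim. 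Two minor points of precision, neither a gap: the joint injectivity of $\{\mathrm{ev}_g\}_{g\in G}$ is not quite that $K[G]$ ``separates points'' but rather that a regular function vanishing at every $K$-point of $G$ is zero (which holds since $K$ is algebraically closed and $K[G]$ is reduced), applied after writing $u=\sum_i f_i\otimes r_i$ with the $r_i$ linearly independent over $K$; and in the equivariance step you should say explicitly that $h\cdot\Phi(u)$ refers to the action of $h$ on $R$ obtained by restricting the given $G$-action to $H\le G$, which is indeed the action with respect to which statement (i) asserts equivariance and statement (ii) takes $H$-invariants in $R$.
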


\begin{proof} This is  Theorem 9.1 in \cite{G} (in \cite{G} $R$ is assumed to be a commutative $K$-algebra,
but commutativity is nowhere used in the proof, which works verbatim in the present generality).
\end{proof}

\begin{theorem}\label{thm:transfer} Let $R$ be a  left noetherian finitely generated $K$-algebra satisfying
a polynomial identity, and $G$ a linearly reductive linear algebraic group acting rationally on $R$ via $K$-algebra automorphisms.
Suppose that $H$ is a subgroup of $G$ such that $K[G]^H$ is a finitely generated $K$-algebra (for example, $H$
can be a maximal unipotent subgroup of $G$). Then the subalgebra $R^H$ of $H$-invariants in $R$ is finitely generated and is left noetherian.
\end{theorem}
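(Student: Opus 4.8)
The plan is to reduce the statement to Vonessen's theorem \cite{V} (quoted in Section~\ref{sec:intro}) applied to the $G$-algebra $A:=K[G]^H\otimes R$, and then to transport the resulting finiteness properties back to $R^H$ through the transfer isomorphism of Lemma~\ref{lemma:transfer}.

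First I would record that $A=K[G]^H\otimes R$ carries a rational action of $G$ by $K$-algebra automorphisms. Indeed, $K[G]^H\otimes R=(K[G]\otimes R)^H$ is $G$-stable inside $K[G]\otimes R$ (the $G$- and $H$-actions commute, as noted just before Lemma~\ref{lemma:transfer}), and the $G$-action induced by the embedding $g\mapsto(g,1)$ is left translation on $K[G]$ tensored with the given action on $R$. This action is rational because the regular representation on $K[G]$ is rational, $R$ is a rational $G$-module by hypothesis, and rationality is inherited by tensor products and by $G$-submodules.

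Next I would verify the hypotheses of Vonessen's theorem for $A$. \emph{Finite generation}: $A$ is generated as a $K$-algebra by the (finitely many) generators of $K[G]^H$ tensored with $1$, together with $1$ tensored with the (finitely many) generators of $R$. \emph{Left noetherianity}: writing the commutative finitely generated algebra $K[G]^H$ as a quotient of a polynomial ring $K[z_1,\ldots,z_m]$ exhibits $A$ as a quotient of $R\otimes_K K[z_1,\ldots,z_m]=R[z_1,\ldots,z_m]$, the polynomial ring over $R$ in central commuting variables; by the Hilbert basis theorem for left noetherian coefficient rings the latter is left noetherian, hence so is its quotient $A$. \emph{Polynomial identity}: since $K$ is algebraically closed, hence infinite, the identity of $R$ can be multilinearized, so $R$ satisfies a multilinear identity $f$ of some degree $k$; for simple tensors one computes $f(s_1\otimes r_1,\ldots,s_k\otimes r_k)=(s_1\cdots s_k)\otimes f(r_1,\ldots,r_k)=0$ using the commutativity of $K[G]^H$, and multilinearity of $f$ extends the vanishing from simple tensors, which span $A$, to all of $A$. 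Thus $A$ is a finitely generated, left noetherian PI-algebra on which $G$ acts rationally, and Vonessen's theorem yields that $A^G=(K[G]^H\otimes R)^G$ is finitely generated and left noetherian.

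Finally, by Lemma~\ref{lemma:transfer}(ii) the map $\Phi$ restricts to a $K$-algebra isomorphism $(K[G]^H\otimes R)^G\cong R^H$, so finite generation and left noetherianity transfer to $R^H$, completing the argument; the parenthetical case of a maximal unipotent $H$ is covered because then $K[G]^H$ is finitely generated by the Had\v{z}iev--Grosshans theorem (cf.\ Proposition~\ref{prop:unipotent}). I expect the main obstacle to be the verification of left noetherianity of $A$, namely the reduction to a polynomial ring over $R$ and the invocation of the noncommutative Hilbert basis theorem; the remaining hypotheses of Vonessen's theorem and the final transport through $\Phi$ are routine once the framework of Lemma~\ref{lemma:transfer} is in place.
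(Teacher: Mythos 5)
Your proposal is correct and follows essentially the same route as the paper: verify that $K[G]^H\otimes R$ is a finitely generated left noetherian PI-algebra (left noetherianity via iterated Hilbert basis theorem, the PI property via a multilinear identity of $R$), apply Vonessen's theorem to its $G$-invariants, and transport the conclusion to $R^H$ through the isomorphism of Lemma~\ref{lemma:transfer}(ii). Your additional explicit checks (rationality of the $G$-action on the tensor product, the computation of $f$ on simple tensors) are details the paper leaves implicit but are consistent with its argument.
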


\begin{proof}
Recall that the polynomial algebra $S[x]$ over a left noetherian ring $S$ is left noetherian (see for example \cite{MR}).
An iterated use of this yields that
since $K[G]^H$ is a finitely generated commutative $K$-algebra and $R$ is left noetherian, the algebra  $K[G]^H\otimes R$ is left noetherian.
As both $K[G]^H$ and $R$ are finitely generated $K$-algebras, the same holds for $K[G]^H\otimes R$.
Moreover, $K[G]^H\otimes R$ is a PI-algebra, since it satisfies the same multilinear polynomial identities as $R$,
and every PI-algebra over any field $K$ necessarily satisfies a multilinear polynomial identity.
A theorem of Vonessen \cite{V} asserts that the subalgebra of invariants of the linearly reductive group $G$ acting rationally
on an affine left noetherian PI-algebra is finitely generated and left noetherian. By this theorem we conclude
that $(K[G]^H\otimes R)^G$ is finitely generated and is left noetherian.
The statement of the theorem now follows from the isomorphism $R^H\cong (K[G]^H\otimes R)^G$ in Lemma~\ref{lemma:transfer}.
\end{proof}


\end{document}